\newcommand{\lp}{\left(}
\newcommand{\rp}{\right)}
\begin{document}

\author{Estepan Ashkarian, Ataleshvara Bhargava, Nicholas Gismondi, and Matthew Novack}

\title{Intermittent singular solutions of the stationary 2D Navier-Stokes equations in sharp Sobolev spaces}

\maketitle

\begin{abstract}
    In this paper we construct non-trivial solutions to the stationary Navier-Stokes equations on the two dimensional torus which lie in $\bigcap_{\epsilon \in (0,1)} L^{2-\epsilon}(\T^2) \cap \dot H^{-\epsilon}(\T^2)$. Due to the fact that our solutions are not square integrable, we must redefine the notion of solution. Our result gives a sharp extension of the recent work of Lemari\'e-Rieusset, \cite{PGLR} who proved a similar result in the space $\dot{H}^{-1} \cap \operatorname{BMO}^{-1}$.  The main new ingredient is the incorporation of intermittency into the construction of the solutions.
\end{abstract}

\section{Introduction}

\subsection{Motivation and background}
In this paper we consider the stationary two-dimensional incompressible Navier-Stokes equations
    \begin{equation}\label{eq: SNSE}\begin{cases}
        \operatorname{div}(u\otimes u)-\Delta u + \nabla p =0\\
        \operatorname{div}(u)=0\end{cases}\end{equation}
posed on the torus for $u(x)\colon \mathbb{T}^{2}\to \R^2$. We will consider mean zero solutions, that is 
\begin{equation*}
    \int_{\T^{2}}u\,dx=0.
\end{equation*}
Let us start by recalling known results on solutions of~\eqref{eq: SNSE}. If $u$ is a smooth solution of~\eqref{eq: SNSE}, then it must be a trivial one. Indeed, we multiply the equation by $u$, integrate by parts, and use $\operatorname{div}( u)=0$ to obtain 
\begin{align*}
    \sum_{i=1}^{2}\|\nabla u^{i}\|_{L^{2}(\T^2)}^{2}&=\int_{\T^{2}}\partial_{j}u^{i}\partial_{j}u^{i}\,dx\\&=-\int_{\T^{2}}u^{i}\partial_{jj}u^{i}\,dx\\
    &=-\int_{\T^{2}}\partial_{j}(u^{i}u^{j})u^{i}\,dx -\int_{\T^{2}}u^{i}\partial_{i}p\,dx\\
    &=0 \, ,
\end{align*}
which implies for mean-zero solutions that $u\equiv 0$.  The same computation can be justified when we assume $u$ is a solution of~\eqref{eq: SNSE} in $L^{p}(\T^{2})$ when $p>2$. In this scenario, $u$ must in fact be smooth, and hence we can apply the previous argument to conclude $u \equiv 0$; see recent work of Lemari\'e-Rieusset~\cite{PGLR} for more details. Moreover, in~\cite{PGLR} it is shown that any solution $u$ of~\eqref{eq: SNSE} in the Lorentz space $L^{2,1}(\T^{2})$ is trivial.

On the other hand, in~\cite{PGLR} Lemarié-Rieusset proved that there exist nontrivial stationary solutions in $\dot H^{-1}(\T^2)$ to~\eqref{eq: SNSE}.  While it is clear how to define weak solutions to~\eqref{eq: SNSE} if $u \in L^2(\T^2)$, it is not obvious what is meant by a weak solution if $u \otimes u$ is not locally integrable. Lemarié-Rieusset treats this issue by expanding $u \otimes u$ in a doubly infinite Fourier series and checking that this infinite sum converges in a negative Sobolev space.  We shall adopt a similar notion in Definition~\ref{def:para:solns} using paraproducts.  While the result in~\cite{PGLR} is the first for nontrivial stationary weak solutions, there are two earlier but relevant results of Christ~\cite{Christ1, Christ2}.  In~\cite{Christ1}, Christ constructs nontrivial time-dependent solutions of the one dimensional periodic cubic nonlinear Schr\"odinger equation which belong to $C^0_t H^{-\epsilon}_x$.  The key issue is again the interpretation of the cubic nonlinearity $|u|^2 u$ when $u \notin L^3_{t,x}$, and Christ shows that this object has an interpretation as a limit $\lim_{\epsilon\rightarrow 0}|u_\epsilon|^2 u_\epsilon$, where $u_\epsilon$ is a mollified version of $u$. In~\cite{Christ2}, Christ then treats the time-dependent two-dimensional Navier-Stokes equation using similar methodology.  We remark that Christ identifies a connection between his work and the well-known earlier works of Scheffer~\cite{Scheffer93} and Shnirelman~\cite{Shnirelman00} on non-uniqueness for the Euler equations; in particular, we quote the following observation from~\cite{Christ2}:

\begin{center}
\textit{Our construction and that of Shnirelman have in common both the use of
driving forces tending weakly to zero, and the exploitation of this reverse energy cascade.}
\end{center}

\noindent In particular this also highlights the connection between Christ's works~\cite{Christ1, Christ2} and modern convex integration/Nash iteration techniques descended from~\cite{Scheffer93, Shnirelman00}.

\subsection{Main result} In this paper, we consider the following notion of stationary solution. For $f \in H^s$ and $g \in H^{s'}$, $s,s' \in \R$, we formally define
$$
fg = \sum_{j,j' \geq 0} \mathbb{P}_{2^j}f \mathbb{P}_{2^{j'}}g.
$$
The projection operators $\mathbb{P}_{2^j}$ are the standard Littlewood-Paley projections onto frequencies of size $\approx 2^{j}$; see Definition~\ref{def:projs}. There is not much that can be said in general about the convergence of this sum in negative Sobolev spaces. However, we may offer the following definition for the mean-zero portion $\mathbb{P}_{\neq 0}(fg) = (\operatorname{Id}-\fint_{\T^2})(fg)$ of a formal product $fg$.

\begin{definition}[\textbf{Paraproducts in $\dot H^s(\T^2)$}]\label{def:paras}
    Let $f,g$ be distributions, so that $\mathbb{P}_{2^j}(f), \mathbb{P}_{2^{j'}}(g)$ are well-defined for $j,j'\geq 0$.  We say that $\mathbb{P}_{\not = 0}(fg)$ is well-defined as a paraproduct in $\dot{H}^s(\T^2)$ for some $s \in \R$ if
    $$
    \sum_{j,j' \geq 0} \left\Vert \mathbb{P}_{\not = 0}\left(\mathbb{P}_{2^j}(f) \mathbb{P}_{2^{j'}}(g)\right) \right\Vert_{\dot{H}^s} < \infty \, .
    $$
    Then we define
    $$
    \mathbb{P}_{\not = 0}(fg) = \sum_{j,j' \geq 0} \mathbb{P}_{\not = 0} \left(\mathbb{P}_{2^j}(f) \mathbb{P}_{2^{j'}}(g)\right) \, ,
    $$
    since the right-hand side is an absolutely summable series in $\dot H^s(\T^2)$.
\end{definition}

With the above definition in hand, we define a weak notion of solution to~\eqref{eq: SNSE}.

\begin{definition}[\textbf{Weak paraproduct solutions to~\eqref{eq: SNSE}}]\label{def:para:solns}
If $u \in \dot{H}^s$, $s < 0$, we say $u$ is a weak paraproduct solution to~\eqref{eq: SNSE} if there is $s' \in \R$ such that $\mathbb{P}_{\not = 0}(u \otimes u)$ is well defined as a paraproduct in $\dot{H}^{s'}$ in the sense of Definition~\ref{def:paras} and
    $$
    \left\langle \Delta \phi^i, u^i \right\rangle_{\dot{H}^{-s},\dot{H}^s} + \left\langle \partial_j \phi^i, \mathbb{P}_{\not= 0}(u^iu^j)\right\rangle_{\dot{H}^{-s'}, \dot{H}^{s'}} = 0
    $$
    for all smooth, divergence free vector fields $\phi$.
\end{definition}

Our main result is then as follows.

\begin{theorem}[\textbf{Non-trivial solutions of 2D stationary Navier-Stokes equations}]\label{thm:main} There exist nontrivial (non-constant and non-zero) weak solutions to~\eqref{eq: SNSE} in the sense of Definition~\ref{def:para:solns} belonging to $\displaystyle \bigcap_{\epsilon \in (0,1)} L^{2-\epsilon}(\T^2) \cap \dot H^{-\epsilon}(\T^2)$.
\end{theorem}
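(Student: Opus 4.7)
The plan is to prove Theorem~\ref{thm:main} by a Nash-style convex integration scheme designed for the stationary 2D setting, with intermittent building blocks replacing the $\dot H^{-1}$-level constructions of~\cite{PGLR}. Starting from $u_0 \equiv 0$ together with a fixed nontrivial smooth, trace-free, symmetric ``seed'' stress $R_0$, I would inductively construct $(u_q,R_q)$ solving the Navier-Stokes-Reynolds system
\[
-\Delta u_q + \operatorname{div}(u_q\otimes u_q) + \nabla p_q = \operatorname{div}(R_q), \qquad \operatorname{div}(u_q) = 0,
\]
along a double-exponential schedule of frequency parameters $\lambda_q \to \infty$ and intermittency parameters $r_q \to \infty$, with $\|R_q\|_{L^1} \to 0$ geometrically and $\sum_q \|w_{q+1}\|_{L^{2-\epsilon}\cap \dot H^{-\epsilon}} < \infty$ for every $\epsilon \in (0,1)$, where $w_{q+1} = u_{q+1} - u_q$. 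The limit $u = \sum_q w_q$ will be the desired weak paraproduct solution.

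The perturbation $w_{q+1}$ would be a sum of intermittent, Mikado-type building blocks
\[
w_{q+1}(x) = \sum_{\xi} a_{\xi,q}(x)\, W_{\xi,\lambda_{q+1},r_{q+1}}(x),
\]
where each $W_\xi$ is a smooth, divergence-free vector field oscillating in a direction $\xi \in \mathbb{Z}^2$ at frequency $\lambda_{q+1}$ and concentrated on a fraction $r_{q+1}^{-1}$ of the torus, normalized so that $\|W_\xi\|_{L^2} \sim 1$ and $\|W_\xi\|_{L^p} \sim r_{q+1}^{1/2-1/p}$ for $1\le p\le 2$. The slow coefficients $a_{\xi,q}$ would be chosen from a standard symmetric-matrix decomposition of $-R_q$, so that the low-frequency part of $w_{q+1}\otimes w_{q+1}$ exactly cancels $R_q$, while the leftover high-frequency oscillations are inverted by a standard antidivergence $\mathcal R$ to contribute to $R_{q+1}$.

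The main obstacle, and the reason intermittency is essential, is balancing three competing sources of error in the new Reynolds stress. The dissipative contribution $\mathcal R(\Delta w_{q+1})$ grows in $\lambda_{q+1}$; the nonlinear oscillation error from the high-frequency part of $w_{q+1}\otimes w_{q+1}$ decays in $\lambda_{q+1}$ but grows in the intermittency $r_{q+1}$; and the mixed interaction with $u_q$ must be kept subcritical. Unlike in time-dependent convex integration, there is no $\partial_t$ available to absorb $\Delta w_{q+1}$, so the dissipative loss must be beaten purely by nonlinear high-frequency gain. Two dimensions further restricts the geometry, since useful shear-type blocks only concentrate along a single codirection and a careful divergence-free corrector is needed. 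The heart of the proof is therefore to tune $\lambda_{q+1}$ and $r_{q+1}$ so that $\|R_{q+1}\|_{L^1} \ll \|R_q\|_{L^1}$ while simultaneously $\|w_{q+1}\|_{\dot H^{-\epsilon}} \lesssim \lambda_{q+1}^{-\epsilon}$ and $\|w_{q+1}\|_{L^{2-\epsilon}} \lesssim r_{q+1}^{-\epsilon/(2(2-\epsilon))}$ are both summable in $q$ for every $\epsilon \in (0,1)$; this sharpness requires an intermittent gain on both scales and is unavailable in the Lemari\'e-Rieusset construction.

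Once the iteration closes, passing $q \to \infty$ produces $u = \sum_q w_q \in \bigcap_\epsilon L^{2-\epsilon}(\T^2) \cap \dot H^{-\epsilon}(\T^2)$. To verify that $u$ is a weak paraproduct solution in the sense of Definition~\ref{def:para:solns}, I would check that $\sum_{j,j'\ge 0}\|\mathbb{P}_{\not=0}(\mathbb{P}_{2^j}u\, \mathbb{P}_{2^{j'}}u)\|_{\dot H^{s'}} < \infty$ for some $s' < 0$; this reduces to a bilinear Littlewood-Paley estimate on the building blocks $W_{\xi,\lambda_{q+1},r_{q+1}}$ exploiting their essentially single-frequency nature. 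Testing the Navier-Stokes-Reynolds system against a smooth divergence-free $\phi$ and passing to the limit using $\|R_q\|_{L^1}\to 0$ then yields the weak formulation. Nontriviality follows because the contribution of the seed $R_0$ to $u$ survives the geometrically decaying corrections at later stages.
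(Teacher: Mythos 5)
Your overall framework (Nash iteration with intermittent Mikado-type building blocks, cancelling $R_q$ via the low-frequency part of $w_{q+1}\otimes w_{q+1}$, and passing to the limit) is aligned with the paper's, and you have correctly identified the central obstruction — that in the stationary problem there is no $\partial_t$ available to absorb $\Delta w_{q+1}$. However, there is a genuine gap at precisely this point: you propose to drive $\|R_q\|_{L^1} \to 0$ geometrically, and this does not close in two dimensions.

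The dissipation error you write as $\mathcal{R}(\Delta w_{q+1})$ is essentially the symmetric gradient $\nabla w_{q+1} + \nabla w_{q+1}^T$, and its $L^1$ norm is of order $\lambda_{q+1}\|w_{q+1}\|_{L^1} \sim \delta_q^{1/2}\,\lambda_{q+1}\,\mu^{1/2}$, where $\mu$ is the support measure of the building block and $\delta_q \sim \|R_q\|_{L^1}$. Two-dimensional Mikado flows concentrate on codimension-one strips of width $\lambda_{q+1}^{-1}$, so the best achievable concentration is $\mu \gtrsim \lambda_{q+1}^{-1}$; the dissipation error in $L^1$ is then at least $\delta_q^{1/2}\lambda_{q+1}^{1/2}$, which diverges as $\lambda_{q+1}\to\infty$ rather than decays. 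This is exactly why the paper refuses to measure $R_q$ in $L^1$: the inductive hypothesis~\ref{i:ind:3} asks only that $\|R_q\|_{\dot H^{-2}}\to 0$, which turns the derivative loss into a gain, since $\|\nabla w_{q+1} + \nabla w_{q+1}^T\|_{\dot H^{-2}} \lesssim \|w_{q+1}\|_{\dot H^{-1}} \lesssim \|w_{q+1}\|_{L^{4/3}} \sim \delta_q^{1/2}\mu^{1/4}\to 0$. This change of topology for the error is the central structural idea you are missing, and it has cascading consequences: because $u_q\otimes u_q$ then does \emph{not} converge in $L^1$, the paraproduct Definition~\ref{def:paras} becomes essential, which in turn forces the frequency-shell disjointness of the increments (inductive item~\ref{i:ind:4}) and the separate control of off-diagonal products $w_{m+1}\otimes w_{n+1}$ in $L^1$ and diagonal products $\mathbb{P}_{\neq 0}(w_{m+1}\otimes w_{m+1})$ in $\dot H^{-2}$ (inductive item~\ref{i:ind:5}). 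Your verification of the paraproduct condition is a side remark, but in the paper it is the load-bearing part of the argument.

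A few related deviations worth flagging. Since $R_q$ is controlled only in $\dot H^{-2}$, the paper never inverts the divergence — there is no antidivergence operator $\mathcal{R}$; $R_{q+1}$ is simply defined by~\eqref{eq:Rq+1} and estimated term by term in $\dot H^{-2}$. The increment is built as $w_{q+1} = \nabla^\perp(\cdot)$ of a stream function (Definition~\ref{def:increm}), which is automatically divergence-free and obviates the separate corrector you mention. Finally, you start from $u_0\equiv 0$ and assert that nontriviality "survives"; the paper instead starts from a nonzero $u_0(x) = A\sin(2\pi x_2)e_1$ and propagates the explicit inductive lower bound $\|u_q\|_{L^{p(q)}} > C^{-1}(1 + 2^{-q})$ with $p(q) = 2 - 2^{-q-10}$. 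With $u_0 = 0$ and no $L^1$ convergence of $R_q$, your nontriviality claim has no quantitative backbone, and the drift of the exponent $p(q)\to 2$ is what lets the paper maintain a lower bound in the limiting topology without requiring $u \in L^2$.
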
 




\noindent Considering that stationary mean-zero solutions in $L^{2,1}(\T^2)$ must be trivial from~\cite{PGLR}, Theorem~\ref{thm:main} is essentially sharp.  The only possible strengthening would be to address the endpoint case $u \in L^2(\T^2)$.  It appears that there are clear obstructions which prevent any simple extension of our method to the case $u \in L^2(\T^2)$.  However, it is conceivable to us that stationary solutions in $L^2(\T^2)$ may be nontrivial, although a different method of proof may be required.

\begin{remark}[\textbf{Extensions of Theorem~\ref{thm:main}}]
The same techniques we use to prove Theorem~\ref{thm:main} can easily be extended to prove the following: let $u_0$ be any smooth divergence-free vector field.  Then for any $\epsilon>0$, there exists $u_\epsilon,p_\epsilon$ which solves~\eqref{eq: SNSE} in the sense of Definition~\ref{def:para:solns} and satisfies $\| u_\epsilon - u_0 \|_{\dot H^{-\epsilon}} + \| u_\epsilon - u_0 \|_{L^{2-\epsilon}}<\epsilon$.
\end{remark}

\begin{remark}[\textbf{Regularity in $L^\infty$-based Besov spaces}]
Our construction gives solutions lying in $\bigcap_{\epsilon>0} B^{-\sfrac 12 - \epsilon}_{\infty,\infty}$.  Indeed this is immediate from Bernstein's inequality and the absolute summability of the series used to define $u = \sum_{q\geq 0} u_{q+1} - u_{q}$ in $\bigcap_{\epsilon>0} L^{2-\epsilon}$; see~\eqref{e:w:est}.
\end{remark}

\subsection{Related literature}
In addition to the recent work of Lemarié-Rieusset~\cite{PGLR} and the earlier work of Christ~\cite{Christ2}, the past decade has seen numerous flexibility results for the Navier-Stokes equations.  Many of these fall under the umbrella of convex integration/Nash iteration, initiated by De Lellis and Sz\'ekelyhidi in~\cite{DLS09, DLS13} for the Euler equations.  Most relevant to our setting is the intermittent style of Nash iteration, developed first by Buckmaster and Vicol~\cite{BV19} for the Navier-Stokes equations.  Variants of intermittent convex integration have been utilized by Luo~\cite{Luo} for the 4D NSE, Buckmaster, Colombo, and Vicol~\cite{BCV} to concentrate the temporal sets on which the solutions to 3D NSE are rough, and Cheskidov and Luo~\cite{CheskidovLuo, CL2} for solutions in $L^{2-}_t L^\infty_x$ and  $C^t L^p_x$ for $p<2$. Alternatively, Albritton, Bru\'e, and Colombo~\cite{ABC} utilized a spectral instability to prove non-uniqueness of Leray solutions for the forced 3D Navier-Stokes equations, and Albritton and Colombo~\cite{AC} used similar methods to treat the 2D hypo-dissipative Navier-Stokes equations. Finally, we mention recent work of Coicolescu and Palasek~\cite{CP}, in which an alternative method based on a careful choice of a lacunary series as initial data gives non-uniqueness of smooth solutions from critical data.

\subsection{Quick idea of the proof}

We use a Nash iteration method, constructing a nontrivial solution to \eqref{eq: SNSE} as a limit of smooth solutions of the Euler Reynolds system
\begin{subequations}\label{eq:RS}
\begin{align}
\operatorname{div}(u_q \otimes u_q) - \Delta u_q + \nabla p_q = \operatorname{div}{R_q}
\\ \operatorname{div}(u_q)=0 \,  ,
\end{align}
\end{subequations}
where $R_q$ is assumed to be symmetric. We assume $(u_q, R_q)$ satisfy natural inductive assumptions which imply that $u_q \rightarrow u$ in $L^{2-\epsilon} \cap \dot H^{-\epsilon}$.  In particular, we do \emph{not} prove that $R_q\rightarrow 0$ in $L^1$, only that $R_q\rightarrow 0$ in $\dot H^{-2}$; see \ref{i:ind:1}-\ref{i:ind:5} for precise formulations. 

In standard Nash iterations, convergence in the limit as $q\rightarrow \infty$ of $u_q \otimes u_q$ follows from the absolute summability of $\sum_{q\geq 0} \| u_{q+1} - u_q \|_{L^2}$. Since our solutions do not belong to $L^2$, we instead prove that $u_q\otimes u_q$ converges to $u\otimes u$ in the sense of Definition~\ref{def:paras}. Morally speaking, $u_q - u_{q-1} = \mathbb{P}_{2^{f(q)}} u$, where $f(q)\rightarrow \infty$ as $q\rightarrow \infty$ superlinearly. This allows for successive iterates to be decorrelated and affords us stronger control over their products.  We use this to iteratively control the norms of $u_q$ and its Littlewood-Paley projections in negative Sobolev spaces and $L^p$ for $p < 2$ by carefully constructing the velocity increments; see Definition \ref{def:increm}.  Analysis involving the frequency locations of the terms present in the velocity increment plays an important role in proving these estimates. We also control the support sizes and orthogonality properties of the increments.  

To be more concrete, let us write $u = \sum_{q \geq 0} w_q$; then we formally have that 
\begin{align*} u \otimes u & = \sum_{q,q' \geq 0} w_q \otimes w_{q'} 
\\ & = \sum_{q \neq q'} w_q \otimes w_{q'} + \sum_q w_q \otimes w_q.
\end{align*}
For the first sum, we may assume without loss of generality that $q > q'$, and bound the terms by $\| w_{q'} \|_{L^1} \| w_q \|_{L^{\infty}} \lesssim 2^{-(q+q')}$. For the second sum, i.e. ``the diagonal,'' we decompose the terms by writing $w_q \otimes w_q = -R_{q-1} + E$, where $R_{q-1}$ is the previous Reynolds stress error and $E$ is a high frequency oscillation error. Both of these terms will approach zero in $\dot{H}^{-2}$ as $q \rightarrow \infty$, ensuring that the terms on the diagonal converge.  Hence the convergence of $R_q$ to zero in $\dot H^{-2}$ corresponds precisely to the convergence of $u_q \otimes u_q$ in $\dot H^{-2}$.

\subsection{Outline of the paper}
In section~\ref{Section2}, we begin by recalling the basic Littlewood-Paley theory we need to formulate definitions~\ref{def:paras} and \ref{def:para:solns}. We also recall some technical lemmas which will be useful in the sections that follow. In section~\ref{Section3} we formulate our inductive proposition Proposition~\ref{prop:ind} and use it to prove Theorem~\ref{thm:main}. The first portion of section~\ref{Section4} is spent constructing the velocity increment, which will be used to construct the Nash iterates. Then once this is completed, the rest of section~\ref{Section4} is dedicated to the proof of Proposition~\ref{prop:ind}.

\subsection{Acknowledgements}
E.A. was supported by the NSF through grant DMS-2153915.
A.B. acknowledges partial support from M.N. and Trevor Wooley. 
N.G. was supported by the NSF through grant DMS-2400238. M.N. was supported by the NSF through grant DMS-2307357.

\section{Background Theory}\label{Section2}

The following material can be found in \cite{BCD} and \cite{Grafakos}.

\begin{definition}[\textbf{Littlewood-Paley projectors}]\label{def:projs}
    There exists $\varphi \colon \R^{2}\to[0,1]$, smooth, radially symmetric, and compactly supported in $\{6/7 \leq |\xi|\leq 2\}$ such that $\varphi(\xi) = 1$ on $\{1 \leq |\xi| \leq 12/7\}$, 
    \begin{equation}
        \sum_{j\geq 0}\varphi(2^{-j}\xi)=1 \hspace{0.25cm} \text{ for all } \hspace{0.25cm} |\xi|\geq 1, \notag
    \end{equation}
    and $\operatorname{supp}\varphi_{j}\cap \operatorname{supp} \varphi_{j'}=\emptyset$ for all $|j-j'|\geq 2$, where $\varphi_j(\cdot) = \varphi(2^{-j}\cdot)$. We define the projection of a function $f$ on its $0$-mode by 
    \begin{equation}
        \mathbb{P}_{0}f=\fint_{\T^2} f, \notag 
    \end{equation}
    and the projection on the $j^{\rm th}$ shell by
    \begin{equation}
        \mathbb{P}_{2^j}(f)(x)=\sum_{k\in \Z^2}\hat{f}(k)\varphi_{j}(k)e^{2\pi ik\cdot x} \, . \notag 
    \end{equation}
    We also define $\mathbb{P}_{\neq 0}f:=(\operatorname{Id}-\mathbb{P}_{0})f$, and 
    \begin{equation}
        \mathbb{P}_{\leq 2^j}(f)(x) = \sum_{k \in \mathbb{Z}^2} \hat{f}(k)\psi_j(k)e^{2 \pi ik\cdot x}, \notag  
    \end{equation}
    where $\psi_j(\cdot) = \sum_{|j'| \leq j} \varphi_{j'}(\cdot)$.
\end{definition}

\begin{definition}[\textbf{$\dot H^s$ Sobolev spaces}] For $s \in \mathbb{R}$, we define  
$$\dot{H}^{s}(\mathbb{T}^{2})=\left\{ f: \sum_{\substack{k\in \Z^{2}\\k \not = 0}}|k|^{2s}|\hat{f}(k)|^2<\infty\right\}  $$
with the norm induced by the sum above.
\end{definition}
\begin{remark}
For every $f\in \dot H^{s}$ for some $s\in \R$, we can define the Fourier coefficients
\[\hat{f}(k)=\int_{\T^{2}}e^{-2\pi ik\cdot x}f(x)dx, \hspace{0.25cm} \text{ where } \hspace{0.25cm} \T^{2}=[0,1]^{2},\]
and we can define $\mathbb{P}_{2^j}f$ for $j\geq 0$.  Note that each $\mathbb{P}_{2^j}f$ is smooth if $f\in \dot H^{s}$ irrespective of the value of $s\in \R$.
\end{remark} 

\begin{lemma}[\textbf{$L^p$ boundedness of projection operators}]\label{lem:proj}
If $\lambda = 2^j$, then $\mathbb{P}_{\leq \lambda}$ is a bounded operator from $L^p$ to $L^p$ for $1 \leq p \leq \infty$ with operator norm independent of $\lambda$ and $j$. 
\end{lemma}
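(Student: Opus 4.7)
The plan is to realize $\mathbb{P}_{\leq\lambda}$ as a convolution operator on $\T^{2}$ and to show that its kernel has $L^{1}$ norm bounded independently of $\lambda=2^{j}$. Writing $\mathbb{P}_{\leq\lambda}f=K_{\lambda}*f$ with
\[
K_{\lambda}(x):=\sum_{k\in\Z^{2}}\psi_{j}(k)\,e^{2\pi i k\cdot x},
\]
Young's convolution inequality yields $\|\mathbb{P}_{\leq\lambda}f\|_{L^{p}}\leq\|K_{\lambda}\|_{L^{1}(\T^{2})}\|f\|_{L^{p}}$ for every $p\in[1,\infty]$. The task therefore reduces to showing $\|K_{\lambda}\|_{L^{1}(\T^{2})}\leq C$ for a constant $C$ independent of $j$.

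The central observation is that the sum defining $\psi_{j}$ telescopes into a dilation of a single bump. Using $\sum_{j'\geq 0}\varphi(2^{-j'}\xi)=1$ for $|\xi|\geq 1$ from Definition~\ref{def:projs}, one has, for $|\xi|\geq 1$,
\[
\sum_{j'=0}^{j}\varphi(2^{-j'}\xi)=1-\sum_{\ell\geq 1}\varphi\bigl(2^{-\ell}(2^{-j}\xi)\bigr)=m(2^{-j}\xi),
\]
where $m(\eta):=1-\sum_{\ell\geq 1}\varphi(2^{-\ell}\eta)$ is a fixed $C^{\infty}_{c}(\R^{2})$ bump equal to $1$ on $\{|\eta|\leq 12/7\}$ and supported in $\{|\eta|\leq 2\}$. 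Next I would verify that the negative-index contributions $\varphi(2^{|j'|}k)$ for $j'\in\{-j,\dots,-1\}$ vanish at every $k\in\Z^{2}$ with $|k|\geq 2$, and at $k=0$; they are nonzero only at the four modes $k\in\{\pm e_{1},\pm e_{2}\}$. Consequently, on $\Z^{2}$,
\[
\psi_{j}(k)=m(k/\lambda)-m(0)\,\delta_{k,0}+\varepsilon(k),
\]
with $\varepsilon$ supported on at most four lattice points. The rank-four perturbation and the constant-mode correction are trivially bounded on every $L^{p}$, so it suffices to control the convolution kernel corresponding to $m(\cdot/\lambda)$.

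Applying Poisson summation to the Schwartz function $m(\cdot/\lambda)$ on $\R^{2}$ gives
\[
\sum_{k\in\Z^{2}}m(k/\lambda)\,e^{2\pi i k\cdot x}=\sum_{n\in\Z^{2}}\lambda^{2}\check m\bigl(\lambda(x+n)\bigr),
\]
and unfolding the translates over the fundamental domain yields
\[
\Bigl\|\sum_{k\in\Z^{2}}m(k/\lambda)\,e^{2\pi i k\cdot(\cdot)}\Bigr\|_{L^{1}(\T^{2})}\leq \int_{\R^{2}}\lambda^{2}|\check m(\lambda y)|\,dy=\|\check m\|_{L^{1}(\R^{2})},
\]
which is a constant independent of $\lambda$ thanks to the Jacobian cancellation $\lambda^{2}\cdot\lambda^{-2}$. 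Combined with the trivial bounds on the finite-rank correction, this gives $\|K_{\lambda}\|_{L^{1}(\T^{2})}\leq C$ and completes the proof.

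The decisive step is the telescoping identity which recasts $\psi_{j}$ as a genuine dilation of a fixed bump $m$; once this is in place, the rest is routine Poisson summation and Young's inequality. I expect the main technical nuisance---not a real obstacle---to be the careful bookkeeping of the negative-index residuals at $|k|=1$ and the mean-zero correction at $k=0$, both of which live on finitely many modes and are absorbed into a harmless finite-rank operator.
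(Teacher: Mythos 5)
The paper states this lemma without proof, treating it as standard background available in the references \cite{BCD} and \cite{Grafakos} cited at the top of Section~\ref{Section2}, so there is no in-paper argument to compare against. Your proposal is correct and is precisely the standard argument: realize $\mathbb{P}_{\leq\lambda}$ as convolution against a kernel $K_\lambda$, reduce via Young's inequality to a uniform $L^1$ bound on $K_\lambda$, and obtain that bound by recognizing the symbol $\psi_j$ on $\{|\xi|\ge 1\}$ as a dilate $m(\cdot/\lambda)$ of a fixed $C^\infty_c$ bump (modulo a constant-mode correction at $k=0$ and a finite-rank residual at $|k|=1$) and applying Poisson summation. One small remark: the residual $\varepsilon(k)$ coming from the negative-index terms $\varphi_{j'}$ with $-j\le j'\le -1$ in fact vanishes identically on $\mathbb{Z}^2$, since for $|k|=1$ and $\ell\ge 1$ nonvanishing of $\varphi(2^\ell k)$ requires $6/7\le 2^\ell\le 2$, forcing $\ell=1$, and $\varphi(2)=0$ because the partition of unity evaluated at $|\xi|=2$ reads $\varphi(2)+\varphi(1)=1$ with $\varphi(1)=1$; so the ``rank-four'' correction you carry along is actually the zero operator, which only simplifies and does not otherwise affect your argument.
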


Recall the following geometric lemma from~\cite[Lemma~4.2]{BSV}, which we shall need to construct the velocity increments.
\begin{lemma}[\textbf{Reconstruction of symmetric tensors}]\label{lem:geom}
     Let $B(I,\epsilon)$ be the ball of radius $\epsilon$ around the identity matrix in the space of $2\times2$ symmetric matrices. We can choose $\epsilon > 0$ such that there exists a finite set $\Lambda  \subset \mathbb{S}^{1}$ and smooth positive functions $\gamma_k \in C^{\infty}(B(I,\epsilon))$ for $k \in \Lambda$ such that the following hold: 
\begin{enumerate}
    \item $5\Lambda \subseteq \mathbb{Z}^2$ 
    \item For all $R \in B(I,\epsilon)$ we have 
    \[ R = \frac{1}{2} \sum_{k \in \Lambda} (\gamma_k(R))^2 k^{\perp} \otimes k^{\perp}. \]
\end{enumerate}
\end{lemma}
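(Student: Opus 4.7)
The plan is two-stage: first, identify an explicit finite set $\Lambda \subset \mathbb{S}^1$ with $5\Lambda \subset \mathbb{Z}^2$ such that the rank-one symmetric tensors $\{k^\perp \otimes k^\perp : k \in \Lambda\}$ both span the three-dimensional space of symmetric $2\times 2$ matrices and admit a \emph{positive} ``base'' decomposition $I = \tfrac{1}{2}\sum_{k \in \Lambda}(\gamma_k^\ast)^2 k^\perp \otimes k^\perp$; second, perturb this base decomposition smoothly to cover all $R$ in a small ball $B(I,\epsilon)$.

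For the first step, I would exploit the Pythagorean identity $3^2 + 4^2 = 5^2$ and take
\[
\Lambda := \bigl\{(1,0),\, (0,1),\, (3/5, 4/5),\, (3/5, -4/5),\, (4/5, 3/5),\, (4/5, -3/5)\bigr\} \, .
\]
Clearly $\Lambda \subset \mathbb{S}^1$ and $5\Lambda \subset \mathbb{Z}^2$. Pairing the Pythagorean vectors symmetrically across each coordinate axis, a direct computation yields $\sum_{k \in \Lambda} k^\perp \otimes k^\perp = 3I$, so $\gamma_k^\ast \equiv \sqrt{2/3}$ provides a strictly positive base decomposition of $I$. Linear independence of the three tensors coming from $(1,0)$, $(0,1)$, and $(3/5, 4/5)$ then confirms that $\{k^\perp \otimes k^\perp\}_{k \in \Lambda}$ spans the three-dimensional space of symmetric $2\times 2$ matrices.

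For the second step, I would fix a subset $\Lambda_0 = \{k_1, k_2, k_3\} \subset \Lambda$ whose associated tensors form a basis of the symmetric matrices, and set $\gamma_k(R) \equiv \gamma_k^\ast$ for $k \in \Lambda \setminus \Lambda_0$. Writing $R - I = \sum_{i=1}^3 \delta_i(R)\, k_i^\perp \otimes k_i^\perp$ for smooth (in fact affine) functions $\delta_i$ satisfying $\delta_i(I) = 0$, define
\[
\gamma_{k_i}(R) := \sqrt{(\gamma_{k_i}^\ast)^2 + 2\delta_i(R)} \, .
\]
Since $(\gamma_{k_i}^\ast)^2 = 2/3$ and each $\delta_i$ is continuous with $\delta_i(I) = 0$, choosing $\epsilon$ small keeps each radicand strictly positive, so the $\gamma_{k_i}$ are smooth and positive on $B(I, \epsilon)$. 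Substituting into the decomposition produces $\tfrac{1}{2}\sum_k (\gamma_k(R))^2 k^\perp \otimes k^\perp = I + (R - I) = R$.

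The main obstacle is the first step: the constraint $5\Lambda \subset \mathbb{Z}^2$ forces $\Lambda$ to be drawn from only twelve candidate unit vectors (the Pythagorean rationals with denominator $5$), so one must verify that this finite pool is simultaneously rich enough to (i) contain a spanning family of rank-one tensors and (ii) admit a positive linear combination equal to $I$. The symmetric six-element choice above achieves both at once; the rest reduces to the standard observation that $\sqrt{\cdot}$ is smooth away from zero, so the extension to a neighborhood of $I$ is essentially automatic.
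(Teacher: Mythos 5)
Your proposal is correct, and your verification checks out: with your six-element $\Lambda$ one indeed has $\sum_{k \in \Lambda} k^\perp \otimes k^\perp = 3I$ (the diagonal entries sum to $1 + 16/25 + 16/25 + 9/25 + 9/25 = 3$ and the off-diagonal contributions cancel in conjugate pairs), and the triple $(1,0)$, $(0,1)$, $(3/5,4/5)$ gives three linearly independent rank-one tensors, so the affine coefficient functions $\delta_i$ exist, are smooth, and vanish at $I$; the square roots are then smooth and positive on a small ball. The route differs from the paper's in two respects. First, the paper (following \cite{BSV}) works with only the three vectors $k_1 = (1,0)$, $k_2 = (3/5,4/5)$, $k_3 = (3/5,-4/5)$, for which one checks directly that $I = \tfrac{7}{16}\, k_1^\perp \otimes k_1^\perp + \tfrac{25}{32}\, k_2^\perp \otimes k_2^\perp + \tfrac{25}{32}\, k_3^\perp \otimes k_3^\perp$ with positive coefficients; your six-vector set is a perfectly valid but redundant choice, bought at the cost of carrying three ``spectator'' directions with frozen coefficients in exchange for the more symmetric base identity $\sum k^\perp \otimes k^\perp = 3I$. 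Second, the paper cites the implicit function theorem for the perturbation step, whereas you solve the linear system explicitly and observe that the resulting $\delta_i$ are affine in $R$. In this setting the two are essentially the same move --- the map $R \mapsto (a_k(R))_k$ defined by $R = \sum_k a_k\, k^\perp \otimes k^\perp$ over a basis is itself affine, so the ``implicit'' function is in fact explicit --- but your phrasing is arguably more elementary and makes the smoothness and positivity of the $\gamma_k$ visibly automatic. Both arguments are sound; yours simply substitutes explicit linear algebra where the reference invokes a general theorem.
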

In the proof of the above lemma, one chooses $k_1 = (1,0)$, $k_2 = (3/5,4/5)$, and $k_3 = (3/5,-4/5)$. The proof proceeds by writing the identity as a positive linear combination of these three tensors and then applying the implicit function theorem.

\begin{lemma}[\textbf{High-low products in negative Sobolev norms}]\label{lem:RL2}
    Let $a(\cdot)$ be a smooth function on the torus $\T^{2}$ and $V\colon \T^{2}\to \R$ be a mean zero and smooth. Then
    \begin{equation}\label{eq:RL2norm}
        \lim_{\lambda \to \infty}\|a(\cdot)V(\lambda \cdot )\|_{\dot{H}^{-2}}=0.
    \end{equation}
\end{lemma}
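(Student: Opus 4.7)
The plan is to exploit the fact that $V$ is smooth and mean zero on $\T^2$, so inverting the Laplacian on the mean-zero subspace produces a smooth mean-zero $\phi \in C^{\infty}(\T^2)$ with $V = \Delta \phi$. Assuming $\lambda \in \mathbb{Z}_{>0}$, which is what is needed for $V(\lambda\,\cdot)$ to descend to a function on $\T^2$, the chain rule gives $(\Delta \phi)(\lambda x) = \lambda^{-2}\,\Delta_x[\phi(\lambda x)]$. Thus the rescaling of $V$ carries an explicit factor of $\lambda^{-2}$ outside a full Laplacian, and this Laplacian is precisely what is needed to absorb the two derivatives gained by the $\dot H^{-2}$ norm.

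Concretely, the key computation is the Leibniz expansion
\begin{equation*}
a(x)\, V(\lambda x) \;=\; \lambda^{-2}\,\Delta\!\bigl(a\,\phi(\lambda\,\cdot)\bigr)(x) \;-\; 2\lambda^{-1}\,\nabla a(x)\cdot(\nabla \phi)(\lambda x) \;-\; \lambda^{-2}\,\Delta a(x)\,\phi(\lambda x) \, .
\end{equation*}
Each summand is then controlled in $\dot H^{-2}$ using the elementary Fourier-side bounds $\|\Delta h\|_{\dot H^{-2}} \leq C \|h\|_{L^2}$ and $\|h\|_{\dot H^{-2}} \leq C \|h\|_{L^2}$, both of which follow immediately from Plancherel since $|k| \geq 1$ on the dual lattice, combined with the rescaling identities $\|\phi(\lambda\,\cdot)\|_{L^2(\T^2)} = \|\phi\|_{L^2}$ and $\|(\nabla\phi)(\lambda\,\cdot)\|_{L^2(\T^2)} = \|\nabla \phi\|_{L^2}$, which hold whenever $\lambda$ is a positive integer. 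The three terms become of order $\lambda^{-2}\|a\|_{L^\infty}\|\phi\|_{L^2}$, $\lambda^{-1}\|\nabla a\|_{L^\infty}\|\nabla \phi\|_{L^2}$, and $\lambda^{-2}\|\Delta a\|_{L^\infty}\|\phi\|_{L^2}$ respectively, so summing yields $\|a(\cdot)\,V(\lambda\,\cdot)\|_{\dot H^{-2}} \lesssim_{a,\phi} \lambda^{-1} \to 0$ as $\lambda \to \infty$.

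There is no serious obstacle; the slowest-decaying piece (the commutator with the gradient) already gives an explicit rate. The only point worth verifying is that $\phi$ is well defined and smooth, which is immediate since $\widehat V(0)=0$ and $\widehat V$ decays faster than any polynomial, so $\widehat \phi(k) := -(2\pi|k|)^{-2}\,\widehat V(k)$ for $k\neq 0$ defines a smooth periodic function. As an alternative (and slightly sharper) route, one could argue directly on the Fourier side: the coefficients of $aV(\lambda\,\cdot)$ at $n\neq 0$ equal $\sum_{m\neq 0}\widehat a(n-\lambda m)\widehat V(m)$, and the rapid decay of $\widehat a$ essentially localizes this sum to $|n|\gtrsim \lambda$, whereupon the $|n|^{-4}$ weight in $\dot H^{-2}$ gives decay like $\lambda^{-2}$.
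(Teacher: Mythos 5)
Your proof is correct, but it takes a genuinely different route than the paper's. The paper decomposes $a$, not $V$: it splits $a(x)V(\lambda x) = \mathbb{P}_{\leq \lambda/2}(a)\,V(\lambda\cdot) + \mathbb{P}_{>\lambda/2}(a)\,V(\lambda\cdot)$, then observes that (since $V$ is mean-zero) the low-frequency product has Fourier support outside $B(0,\lambda/2)$ so the $|k|^{-4}$ weight in $\dot H^{-2}$ kills it, while the high-frequency part vanishes in $L^\infty$ as $\lambda\to\infty$ because $\hat a\in\ell^1$ has vanishing tails. Your approach instead inverts the Laplacian on the oscillating factor, writing $V=\Delta\phi$ and exploiting the chain-rule scaling $(\Delta\phi)(\lambda x)=\lambda^{-2}\Delta[\phi(\lambda\cdot)]$, then redistributing derivatives via the Leibniz rule so that each resulting term either sits under a full Laplacian (which cancels the $\dot H^{-2}$ weight) or carries an explicit inverse power of $\lambda$. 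The paper's method stays closer to the high-low paraproduct philosophy used throughout, and it generalizes directly to Lemma~\ref{lem:RL} where an intermediate-frequency factor $\beta_\lambda$ appears; your method gives a cleaner, quantitative rate $\lesssim_{a,\phi}\lambda^{-1}$ essentially for free, and the manipulations are all elementary (Plancherel, $L^2$-invariance under integer rescaling, $|k|\geq 1$ on $\mathbb{Z}^2\setminus\{0\}$). One small point worth being explicit about: you correctly restrict to $\lambda\in\mathbb{Z}_{>0}$ so that $\phi(\lambda\cdot)$ descends to $\T^2$ and the $L^2$ rescaling identities hold; the paper makes the same implicit assumption (it is harmless for the application, where $\lambda$ is always a power of $2$). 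Your closing Fourier-side sketch is fine as a remark but is stated loosely; the $\lambda^{-2}$ you mention there is for the squared $\dot H^{-2}$ norm, consistent with the $\lambda^{-1}$ rate in your main argument.
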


\begin{proof}
Consider the following decomposition of the product $a(\cdot)V(\lambda \cdot)$ into low and high frequencies:
\begin{equation}\label{eq:high-low-decomp}
    a(x)V(\lambda x)= \mathbb{P}_{\leq  \frac{\lambda}{2}}(a)(x)V(\lambda x) + \mathbb{P}_{> \frac{\lambda}{2}}(a)(x)V(\lambda x).
\end{equation}
Observe that the low frequency term in \eqref{eq:high-low-decomp} can be dealt with in the following manner:
\begin{align*}
    \|\mathbb{P}_{\leq \frac{\lambda}{2}}(a)V(\lambda \cdot)\|_{\dot{H}^{-2}}^{2}&=\sum_{k\neq 0}|k|^{-4}\left|\lp \mathbb{P}_{\leq \frac{\lambda}{2}}(a)V(\lambda \cdot)\rp^{\wedge}(k) \right|^2\\
    &=\sum_{|k|\gtrsim \frac{\lambda}{2}}|k|^{-4}\left| \lp \mathbb{P}_{\leq \frac{\lambda}{2}}(a)V(\lambda \cdot)\rp^{\wedge}(k) \right|^2\\
    &\leq \sum_{|k|\gtrsim \frac{\lambda}{2}}|k|^{-4}\|\mathbb{P}_{\leq \frac{\lambda}{2}}(a)\|_{L^{\infty}}^{2}\|V(\lambda \cdot)\|_{L^{\infty}}^{2}\\
    &\leq \sum_{|k|\gtrsim \frac{\lambda}{2}}|k|^{-4}\|a\|_{L^{\infty}}^{2}\|V\|_{L^{\infty}}^{2}\\
    &\lesssim \sum_{|k|\gtrsim \frac{\lambda}{2}}|k|^{-4}
\end{align*}
where in the second equality we have used that the lowest frequency present in $V(\lambda \cdot)$ is of order $\lambda$ (since $\hat{V}(0) = 0$) and thus $(\mathbb{P}_{\leq \lambda/2}(a)V(\lambda \cdot))^{\wedge}(k)$ is supported outside the ball of radius comparable to $\lambda/2$. Now upon sending $\lambda \to \infty$ we see that this quantity goes to $0$.  The high frequency term in \eqref{eq:high-low-decomp} on the other hand may be dealt with as follows:
\begin{align*}
\|\mathbb{P}_{>\frac{\lambda}{2}}(a)V(\lambda \cdot)\|^2_{\dot{H}^{-2}}&=\sum_{k\neq 0} |k|^{-4}\left|\lp \mathbb{P}_{>\frac{\lambda}{2}}(a) V(\lambda \cdot) \rp^{\wedge}(k) \right|^{2}\\
&\lesssim \Vert \mathbb{P}_{>\frac{\lambda}{2}}(a) V(\lambda \cdot) \Vert_{L^\infty}^2 \sum_{k\neq 0}|k|^{-4}\\
&\simeq \Vert \mathbb{P}_{>\frac{\lambda}{2}}(a) V(\lambda \cdot) \Vert_{L^\infty}^2\\
\end{align*}
Notice that 
    \[\|\mathbb{P}_{>\frac{\lambda}{2}}(a)V(\lambda \cdot)\|_{L^{\infty}}\leq \|\mathbb{P}_{>\frac{\lambda}{2}}(a)\|_{L^{\infty}}\|V\|_{L^{\infty}} \longrightarrow 0 \quad \text{as} \quad \lambda \to \infty.\]
    Indeed, by the dominated convergence theorem we have 
    \[ \Vert \mathbb{P}_{> \frac{\lambda}{2}}(a) \Vert_{L^{\infty}} \leq \sum_{|k|\gtrsim\frac{\lambda}{2}}|\widehat{a}(k)|  \longrightarrow 0 \quad \text{as} \quad \lambda \to \infty\]
    since $\hat{a} \in \ell^1$. Therefore,
    \[\Vert \mathbb{P}_{>\frac{\lambda}{2}}(a)V(\lambda \cdot) \Vert_{\dot{H}^{-2}} \lesssim \|\mathbb{P}_{>\frac{\lambda}{2}}(a)V(\lambda \cdot)\|_{L^{\infty}}\longrightarrow 0 \, , \]
    and so in particular we get that
    $$
    \Vert a(\cdot) V(\lambda \cdot) \Vert_{\dot{H}^{-2}} \to 0
    $$
    as $\lambda \to \infty$.   
\end{proof}

\begin{lemma}[\textbf{High-higher-low products in negative Sobolev spaces}]\label{lem:RL}
Let $\alpha$ and $V$ be smooth functions on the torus such that $V$ is mean-zero, and let $\{\beta_\lambda\}_{\lambda \in \mathbb{N}}$ be a sequence of smooth functions such that $\sup_{\lambda} \| \beta_\lambda \|_{L^1} < \infty$ and $\lim_{\lambda \rightarrow \infty} \| \mathbb{P}_{> \lambda^2} \beta_\lambda \|_{L^1} \rightarrow 0$. Then we have
    \begin{equation}
        \lim_{\lambda \to \infty}\left\Vert \alpha(\cdot) \beta_{\lambda}(\cdot) V(\lambda^6\cdot) \right\Vert_{\dot{H}^{-2}}=0. \notag 
    \end{equation}
\end{lemma}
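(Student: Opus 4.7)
The plan is to mimic the proof of Lemma~\ref{lem:RL2}, with the additional wrinkle that we only have $L^1$, not $L^\infty$, control on $\beta_\lambda$. I would first decompose
\begin{equation*}
\alpha \beta_\lambda V(\lambda^6 \cdot) = (\mathbb{P}_{\leq \lambda^3}\alpha)(\mathbb{P}_{\leq \lambda^2}\beta_\lambda) V(\lambda^6 \cdot) + R_\lambda \, ,
\end{equation*}
where $R_\lambda$ collects the three cross terms containing at least one high-frequency factor (either $\mathbb{P}_{>\lambda^3}\alpha$ or $\mathbb{P}_{>\lambda^2}\beta_\lambda$). The threshold $\lambda^2$ for $\beta_\lambda$ is dictated by the hypothesis, while any intermediate scale (here $\lambda^3$) works for $\alpha$.

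The error $R_\lambda$ is handled via the embedding $\|f\|_{\dot{H}^{-2}(\mathbb{T}^2)} \lesssim \|f\|_{L^1(\mathbb{T}^2)}$, which is immediate from $|\widehat{f}(k)| \leq \|f\|_{L^1}$ together with $\sum_{k \neq 0} |k|^{-4} < \infty$. The piece of $R_\lambda$ containing $\mathbb{P}_{>\lambda^2}\beta_\lambda$ is bounded in $L^1$ by $\|\alpha\|_{L^\infty} \|V\|_{L^\infty} \|\mathbb{P}_{>\lambda^2}\beta_\lambda\|_{L^1}$, which tends to $0$ by hypothesis; the pieces containing $\mathbb{P}_{>\lambda^3}\alpha$ are bounded by $\|\mathbb{P}_{>\lambda^3}\alpha\|_{L^\infty} \|V\|_{L^\infty} \|\beta_\lambda\|_{L^1}$, which tends to $0$ since $\widehat{\alpha}\in \ell^1$ (the same dominated-convergence step used at the end of the proof of Lemma~\ref{lem:RL2}).

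For the main term, I would repeat the frequency-localization argument from Lemma~\ref{lem:RL2}. The product $(\mathbb{P}_{\leq \lambda^3}\alpha)(\mathbb{P}_{\leq \lambda^2}\beta_\lambda)$ has Fourier support contained in $\{|k| \lesssim \lambda^3\}$, while since $V$ is mean-zero, $V(\lambda^6\cdot)$ has Fourier support on nonzero integer multiples of $\lambda^6$. Hence $\widehat{(\mathbb{P}_{\leq \lambda^3}\alpha)(\mathbb{P}_{\leq \lambda^2}\beta_\lambda) V(\lambda^6\cdot)}(k)$ vanishes for $|k| \leq \lambda^6/2$ once $\lambda$ is large enough. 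Combining this with the pointwise bound $|\widehat{f}(k)|\leq \|f\|_{L^1}$, Lemma~\ref{lem:proj} (which controls $\mathbb{P}_{\leq \lambda^3}\alpha$ in $L^\infty$ by $\|\alpha\|_{L^\infty}$ and $\mathbb{P}_{\leq \lambda^2}\beta_\lambda$ in $L^1$ by $\|\beta_\lambda\|_{L^1}$), and the uniform $L^1$ bound on $\beta_\lambda$, I expect
\begin{equation*}
\|(\mathbb{P}_{\leq \lambda^3}\alpha)(\mathbb{P}_{\leq \lambda^2}\beta_\lambda) V(\lambda^6\cdot)\|_{\dot{H}^{-2}}^2 \lesssim \sum_{|k| \gtrsim \lambda^6} |k|^{-4} \lesssim \lambda^{-12} \, ,
\end{equation*}
which vanishes as $\lambda \to \infty$.

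The main obstacle is precisely the $L^1$-only hypothesis on $\beta_\lambda$, which prevents any direct $L^\infty$-based estimate of the kind used in Lemma~\ref{lem:RL2}. This is resolved by using the dual embedding $L^1 \hookrightarrow \dot{H}^{-2}$ on the error pieces, and by keeping $\beta_\lambda$ inside an $L^1$ norm on the main piece while exploiting the wide spectral gap between $\lambda^3$ and $\lambda^6$ to pay for the $\dot{H}^{-2}$ weight.
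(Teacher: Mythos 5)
Your proof is correct and follows essentially the same strategy as the paper's: split off the pieces containing a high-frequency projection of $\alpha$ or of $\beta_\lambda$, control them via $L^1 \hookrightarrow \dot{H}^{-2}$ together with $\|\mathbb{P}_{>\lambda^3}\alpha\|_{L^\infty}\to 0$ and $\|\mathbb{P}_{>\lambda^2}\beta_\lambda\|_{L^1}\to 0$, and then exploit the spectral gap between the low-frequency product and $V(\lambda^6\cdot)$ for the main term. The only cosmetic differences are your choice of threshold $\lambda^3$ rather than $\lambda$ for $\alpha$, and your bound for the main term, where you use $|\widehat{f}(k)|\leq\|f\|_{L^1}$ together with $\sum_{|k|\gtrsim\lambda^6}|k|^{-4}\lesssim\lambda^{-12}$ in place of the paper's two-step route (frequency localization to pass from $\dot H^{-2}$ to $L^2$, then Bernstein to go from $L^2$ to $L^1$ on $\mathbb{P}_{\leq\lambda^2}\beta_\lambda$); your route is marginally cleaner and gives a slightly sharper power of $\lambda$, but both land in the same place.
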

\begin{proof}
The result is a variant of the previous lemma, and so we only outline the main steps of the proof. We decompose the product as follows:
\begin{align}
    \alpha(x) \beta_{\lambda}(x) V(\lambda^6x) &= \mathbb{P}_{\leq \lambda} \alpha(x) \mathbb{P}_{\leq \lambda^2}\left(\beta_{\lambda}\right)(x) V(\lambda^6x) \notag  \\
    & \qquad + \mathbb{P}_{>\lambda} \alpha(x) \mathbb{P}_{\leq \lambda^2}\left(\beta_{\lambda}\right)(x) V(\lambda^6x) + \alpha(x) \mathbb{P}_{>  \lambda^2}\left(\beta_{\lambda}\right)(x) V(\lambda^6x) \, . \label{eq:freqdecomp-RL}
\end{align}
The first term in \eqref{eq:freqdecomp-RL} can now be estimated using Bernstein's inequality and the uniform bound on $\| \beta_\lambda\|_{L^1}$ by
\begin{align*}
    \| \mathbb{P}_{\leq \lambda} \alpha(x) \mathbb{P}_{\leq \lambda^2}\left(\beta_{\lambda}\right)(x) V(\lambda^6x) \|_{\dot H^{-2}} & \lesssim \lambda^{-6} \| \mathbb{P}_{\leq \lambda} \alpha(x) \mathbb{P}_{\leq \lambda^2}\left(\beta_{\lambda}\right)(x) V(\lambda^6x) \|_{L^2} \\
    &\lesssim \lambda^{-6} \| V \|_{L^\infty} \| \alpha \|_{L^\infty} \| \mathbb{P}_{\leq \lambda^2} \beta_\lambda \|_{L^2} \\
    &\lesssim \lambda^{-6} \| V \|_{L^\infty} \| \alpha \|_{L^\infty} \lambda^2 \| \mathbb{P}_{\leq \lambda^2} \beta_\lambda \|_{L^1} \\
    &\rightarrow 0
\end{align*}
as $\lambda\rightarrow \infty$.  Meanwhile, for the second term we use that $L^1$ embeds into $\dot H^{-2}$, the uniform $L^1$ bound on $\beta_\lambda$, and the smoothness of $\alpha$.  Finally, for the last term, we use the same embedding and the assumption that $\| \mathbb{P}_{> \lambda^2} \beta_\lambda \|_{L^1} \rightarrow 0$ as $\lambda \rightarrow \infty$.
\end{proof}

\section{Statement of Inductive Proposition and proof of Theorem~\ref{thm:main}}\label{Section3}

We make the following inductive assumptions about $u_q$ and $R_q$: 
\begin{enumerate}
    \item\label{i:ind:1} $(u_q,R_q)$ solves~\eqref{eq:RS}.
    \item\label{i:ind:2} $\fint_{\mathbb{T}^2} u_q dx = 0$ and $u_q \in C^{\infty}(\T^2)$, and there exists $C > 0$ such that 
    \[  \| u_q \|_{L^{p(q)}(\mathbb{T}^2)} > C^{-1}(1+2^{-q}) \, , \]
    where $p(q') = 2 - 2^{-q'-10}$ for any $q'$.  In addition, we set $u_{-1} = 0$ and assume that for $q' \leq q$ 
    \begin{equation}\label{e:w:est}
    \Vert u_{q'}-u_{q'-1} \Vert_{L^{p(q')}} < 2^{-q'-2} \, 
    \end{equation}
    and $u_{q'} - u_{q'-1}$ is qualitatively smooth.
    \item\label{i:ind:3} $R_q$ is $C^{\infty}$ and $\| R_q \|_{\dot{H}^{-2}} < 2^{-q-10}$.
    \item\label{i:ind:4} For each $q' \leq q$, there exists $j$ (depending on $q'$) such that $\mathbb{P}_{2^j}(u_{q'} - u_{q'-1}) = u_{q'} - u_{q'-1}$. In addition for all $q' < q'' \leq  q$, the frequency support of $u_{q''} - u_{q''-1}$ and $u_{q'} - u_{q'-1}$ are in disjoint Littlewood-Paley shells.
    \item\label{i:ind:5} With the same $C$ as item~\ref{i:ind:2}, we assume that
    \[ \sum_{\substack{m, n \leq q-1 \\ m \neq n}} \| (u_{m+1} - u_m) \otimes (u_{n+1} - u_n) \|_{L^1} < C-2^{-q} \]
    and
    \[ \sum_{m \leq q-1} \| \mathbb{P}_{\not = 0} \left((u_{m+1} -u_m) \otimes (u_{m+1} - u_m)\right) \|_{\dot{H}^{-2}} < C-2^{-q}. \]
\end{enumerate}

\begin{proposition}[\textbf{Inductive proposition}]\label{prop:ind} Fix $C > 0$. Assume for $q \in \mathbb{N}$ that $(u_q, R_q)$ solve \eqref{eq:RS} and satisfy items~\ref{i:ind:1}--\ref{i:ind:5} with this $C$. Then there exists $w_{q+1} = u_{q+1} - u_q$ and $R_{q+1}$, both qualitatively smooth and mean zero, such that items~\ref{i:ind:1}--\ref{i:ind:5} hold with $q$ replaced by $q+1$.
\end{proposition}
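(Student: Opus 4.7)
My plan is to follow the convex-integration/Nash iteration scheme, adapted to the intermittent stationary setting. First, fix a small intermittency scale $\mu_{q+1}\ll 1$ and a large frequency $\lambda_{q+1}\gg \mu_{q+1}^{-1}$, chosen so that the Fourier support of $w_{q+1}$ will lie in a single Littlewood--Paley shell disjoint from those of all $w_m$, $m\leq q$, which verifies item~\ref{i:ind:4}. Apply Lemma~\ref{lem:geom} to a normalized version of $R_q$, say $\mathrm{Id}-\rho_q^{-1}R_q\in B(\mathrm{Id},\epsilon)$ for a smooth scalar amplitude $\rho_q$ chosen large enough, to obtain smooth coefficients $\gamma_k=\gamma_k(\mathrm{Id}-\rho_q^{-1}R_q)$. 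Construct the principal part of the velocity increment as
\[
w_{q+1}^{(p)}(x)=\rho_q^{1/2}(x)\sum_{k\in\Lambda}\gamma_k(x)\,\beta_{\mu_{q+1}}(x)\,\phi_k(\lambda_{q+1}x)\,k^\perp,
\]
where $\beta_{\mu_{q+1}}$ is a smooth intermittent scalar bump (concentrated on a set of Lebesgue measure $\sim\mu_{q+1}$, $L^1$-normalized, and satisfying the tail hypothesis of Lemma~\ref{lem:RL}) and $\phi_k$ is a smooth mean-zero periodic scalar with $\fint\phi_k^2=1$. Add a divergence-free corrector and a mean-zero correction of lower order to produce the full $w_{q+1}$.

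Items~\ref{i:ind:2} and \ref{i:ind:5} are verified using intermittency: since $p(q+1)<2$, the $L^{p(q+1)}$ norm of each scalar building block $\beta_{\mu_{q+1}}\phi_k(\lambda_{q+1}\cdot)$ is a positive power of $\mu_{q+1}$, so by taking $\mu_{q+1}$ sufficiently small I achieve $\|w_{q+1}\|_{L^{p(q+1)}}<2^{-q-2}$. A parallel computation bounds $\|w_{q+1}\|_{L^1}$, yielding the off-diagonal estimate $\sum_m\|w_{q+1}\otimes w_m\|_{L^1}\lesssim\|w_{q+1}\|_{L^1}\max_{m\leq q}\|w_m\|_{L^\infty}$, each factor $\|w_m\|_{L^\infty}$ being finite by qualitative smoothness. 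The $L^{p(q+1)}$ lower bound on $\|u_{q+1}\|_{L^{p(q+1)}}$ then follows from the reverse triangle inequality, the closeness of $p(q+1)$ to $p(q)$, and continuity of the map $p\mapsto\|u_q\|_{L^p}$ (using the smoothness of $u_q$). For the diagonal contribution $\|\mathbb{P}_{\neq 0}(w_{q+1}\otimes w_{q+1})\|_{\dot H^{-2}}$, the low-frequency part of $w_{q+1}\otimes w_{q+1}$ cancels $R_q$ by design of the geometric decomposition (up to an error absorbed in $\|R_q\|_{\dot H^{-2}}<2^{-q-10}$), and the high-frequency oscillation has the form $\alpha(x)\beta_{\mu_{q+1}}^2(x)V(\lambda_{q+1}x)$ (possibly after rescaling to match the $\lambda^6$ scaling) which is handled by Lemma~\ref{lem:RL}.

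I would then define $R_{q+1}$ via a symmetric inverse-divergence operator $\mathcal R$ applied to the sum of the Nash/linear error $\operatorname{div}(u_q\otimes w_{q+1}+w_{q+1}\otimes u_q)$, the dissipative error $-\Delta w_{q+1}$, and the oscillation residue $\mathbb{P}_{\neq 0}(w_{q+1}\otimes w_{q+1})+R_q$ (modulo a pressure adjustment absorbed as usual). The Nash error is bounded in $\dot H^{-2}$ via Lemma~\ref{lem:RL} (with $\alpha$ containing $u_q\gamma_k\rho_q^{1/2}$ and $\beta_\lambda=\beta_{\mu_{q+1}}$); the dissipative error shrinks like $\lambda_{q+1}^{-1}\|w_{q+1}\|_{L^2}$ by Bernstein and frequency localization, and becomes small once $\lambda_{q+1}$ is chosen large enough to beat the intermittent cost $\mu_{q+1}^{-1/2}$; and the oscillation residue is controlled as above. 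The main obstacle I expect is parameter balancing: small $\mu_{q+1}$ is required for $L^{p(q+1)}$ smallness but inflates $\|w_{q+1}\|_{L^2}$ and $\|w_{q+1}\|_{L^\infty}$, so $\lambda_{q+1}$ must be taken enormously large both to ensure the frequency separation in item~\ref{i:ind:4} and to render the dissipative and oscillation errors small in $\dot H^{-2}$, all while respecting the tail hypothesis on $\beta_\lambda$ in Lemma~\ref{lem:RL}. A careful ordering $1\ll\mu_{q+1}^{-1}\ll\lambda_{q+1}$ with polynomially large gaps between the scales resolves these competing demands.
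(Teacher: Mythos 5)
Your overall strategy matches the paper's: intermittent Mikado flows, frequency separation into distinct Littlewood--Paley shells, control of the nonlinear error in $\dot H^{-2}$ via Lemmas~\ref{lem:RL2} and~\ref{lem:RL}, and estimation of the Nash and dissipative errors by smallness of $w_{q+1}$ in $L^1$. Two issues, however, are worth flagging.

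The first is a genuine gap. You assert that $w_{q+1}$ has Fourier support in a single Littlewood--Paley shell, but the construction you write down does not deliver this. Your principal part is $\rho_q^{1/2}(x)\sum_k \gamma_k(x)\,\beta_{\mu_{q+1}}(x)\,\phi_k(\lambda_{q+1}x)\,k^\perp$; the amplitude factor $\rho_q^{1/2}\gamma_k\beta_{\mu_{q+1}}$ is smooth but not band-limited, so multiplying by $\phi_k(\lambda_{q+1}\cdot)$ smears the Fourier support over an unbounded (albeit rapidly decaying) frequency region. Item~\ref{i:ind:4} requires \emph{exact} frequency localization, and the subsequent diagonal analysis of $u_q\otimes u_q$ in the proof of Theorem~\ref{thm:main} depends on the $w_m$ occupying genuinely disjoint shells. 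The paper achieves this by applying explicit low-pass truncations $\mathbb{P}_{\leq\lambda_{q+1}}(a_k(R_q))$ and $\mathbb{P}_{\leq\lambda_{q+1}^2}(\rho_{q+1}^{k^\perp})$ to the slow factors before modulating by $\sin\bigl(\frac{5\pi}{2}\lambda_{q+1}^6 k\cdot x\bigr)$, so the full Fourier support sits in the ball $B\bigl(\frac 54\lambda_{q+1}^6 k,\,\lambda_{q+1}^2+\lambda_{q+1}\bigr)$, which fits inside one shell once $\lambda_{q+1}$ is a large power of $2$. This truncation is not cosmetic; it also introduces the error terms $w_{q+1}^{(p,1)},w_{q+1}^{(p,2)}$ in~\eqref{eq:wp-decomp}, which you would need to estimate.

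The second issue is a route difference rather than an error. You propose defining $R_{q+1}$ via a symmetric inverse-divergence operator $\mathcal R$. The paper has no need for one: the Nash, dissipative, and oscillation errors are all already in the form $\operatorname{div}(\text{explicit symmetric tensor})$, so $R_{q+1}$ is defined directly as in~\eqref{eq:Rq+1}, with the dissipation written as $\nabla w_{q+1}+\nabla w_{q+1}^T$. Since the only thing being bounded is $\|R_{q+1}\|_{\dot H^{-2}}$ (not $\|R_{q+1}\|_{L^1}$), nothing is gained by the gain-of-derivative from $\mathcal R$, and avoiding it keeps the estimates elementary ($L^1\hookrightarrow\dot H^{-2}$ suffices for the Nash error, Bernstein/Sobolev for the dissipative error). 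Relatedly, the paper uses the \emph{constant} normalization $\epsilon_\gamma^{-1}\|R_q\|_{L^\infty}$ in~\eqref{eq:akRq} rather than a position-dependent $\rho_q(x)$; this makes the trace term constant so the pressure increment is a constant, whereas your version would need a genuine pressure adjustment.

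Finally, two small corrections. The lower bound on $\|u_{q+1}\|_{L^{p(q+1)}}$ needs only the reverse triangle inequality and the monotonicity $\|\cdot\|_{L^{p(q)}}\leq\|\cdot\|_{L^{p(q+1)}}$ on a probability space (as in~\eqref{eq:indprop2:est}); continuity of $p\mapsto\|u_q\|_{L^p}$ is not needed. And intermittency does \emph{not} inflate $\|w_{q+1}\|_{L^2}$ — it stays $O(1)$ (see~\eqref{eq:w^p:est} at $p=2$) — only $\|w_{q+1}\|_{L^\infty}$ grows.
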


Assuming for the moment that the inductive proposition holds, we can prove the main result. We then prove Proposition~\ref{prop:ind} in the next section.
\begin{proof}[Proof of Theorem~\ref{thm:main} using Proposition~\ref{prop:ind}]
Set $u_0(x) = A\sin(2\pi x_2)e_1$, $p_0=0$, and
$$
R_0(x) =
\begin{bmatrix}
    0 & -2\pi A \cos(2\pi x_2)\\
    -2\pi A \cos(2\pi x_2) & 0\\
\end{bmatrix} \, . 
$$
Then $u_0$ is mean zero, $\operatorname{div} (u_0\otimes u_0)=0$, and $-\Delta u_0 = A(2\pi)^2 \sin(2\pi x_2)e_1 = \operatorname{div} (R_0)$, so that $(u_0, R_0)$ satisfy item~\ref{i:ind:1}.  We may choose $A$ small enough so that item~\ref{i:ind:3} holds as well.  Clearly item~\ref{i:ind:4} holds.  Finally, we may choose $C>0$ so that items~\ref{i:ind:2} and~\ref{i:ind:5} hold.

Now assume that the inductive proposition holds and fix $\epsilon,\epsilon' \in (0,1)$. Define $u=\lim_{q\to \infty} u_{q}$. We start by showing this limit exists in the $L^{2 - \epsilon'}$ sense for all $\epsilon'\in (0,1)$. Let $\tilde{q}$ be such that $p(\tilde{q}) \geq 2 - \epsilon'$. Hence from \ref{i:ind:2}, specifically \eqref{e:w:est} and the smoothness of $u_{q'}$ for all $q'$, we have
\begin{equation*}
    \begin{split}
        \sum_{q' \leq q} \Vert u_{q'} - u_{q'-1} \Vert_{L^{2 - \epsilon'}} &= \sum_{q' < \tilde{q}} \Vert u_{q'} - u_{q'-1} \Vert_{L^{2 - \epsilon'}} + \sum_{\tilde{q} \leq q' \leq q} \Vert u_{q'} - u_{q'-1} \Vert_{L^{2 - \epsilon'}}\\
        &< \sum_{q' < \tilde{q}} \Vert u_{q'} - u_{q'-1} \Vert_{L^{2 - \epsilon'}} + \sum_{\tilde{q} \leq q'} 2^{-q'-2}\\
        &\lesssim_{\epsilon'} 1
    \end{split}
\end{equation*}
where the implicit constant is independent of $q$ but depends on $\epsilon'$. Hence $u$ is well defined as an element of $L^{2-\epsilon'}$, and since our choice of $\epsilon'$ was arbitrary, $u$ is a well defined element of $\bigcap_{\epsilon' \in (0,1)} L^{2 - \epsilon'}$. Now set
$$
r = \frac{2\epsilon}{1 + \epsilon} \, . 
$$
Since $r \in (0,1)$, we see that $u \in L^{2 - r} = L^{\frac{2}{1 + \epsilon}}$. Sobolev embedding shows that $\dot{H}^{\epsilon} \subset L^{\frac{2}{1 - \epsilon}}$, hence $L^{\frac{2}{1 + \epsilon}} = \left(L^{\frac{2}{1 - \epsilon}}\right)^* \subset \dot{H}^{-\epsilon}$ and so $u \in \dot{H}^{-\epsilon}$. Since our choices of $\epsilon,\epsilon'$ were arbitrary, we conclude that
$$
u \in \bigcap_{\epsilon \in (0,1)} L^{2 - \epsilon} \cap \dot{H}^{-\epsilon}.
$$
From~\ref{i:ind:1} and~\ref{i:ind:2} we get that $u$ is not constant. Now for every smooth divergence free vector field $\phi$, we have using item~\ref{i:ind:1} that
\begin{equation}\label{eq:weak-form}
\int_{\mathbb{T}^{2}} \left(\partial_{i}\phi^{j}u_{q}^{i}u_{q}^{j}+\partial_{ii}\phi^{j}u_{q}^{j}\right) =\int_{\mathbb{T}^{2}}\partial_{i}\phi^{j}R_{q}^{ij}   \, .
\end{equation}
Taking $q\to \infty$, we obtain the following:
\begin{itemize}
    \item $R_{q}^{ij}\to 0$ as $q \to \infty$ in $\dot{H}^{-2}$ due to \ref{i:ind:3}. Therefore 
    \[ \int_{\T^2} \Delta g \Delta^{-1}R_{q}^{ij}\longrightarrow 0 \quad \forall g\in \dot{H}^{2}\]
    and so upon integrating by parts the right hand side of \eqref{eq:weak-form} tends to $0$ as $q \to \infty$.
    \item By the previous bullet point, we have that the left hand side of \eqref{eq:weak-form} tends to $0$. Since $u_{q}\to u$ in $L^{2-\varepsilon}(\T^{2})$, we have
    \[\int_{\mathbb{T}^{2}}\partial_{ii}\phi^{j}u_{q}^{j}\longrightarrow \int_{\T^{2}}\partial_{ii}\phi^{j}u^{j} \, . \]
    Finally using item~\ref{i:ind:4}, we have that
    \begin{align*}
    \lim_{q\to \infty}\int_{\T^2}\partial_{i}\phi^{j}u_{q}^{i}u_{q}^{j}
      &=  \lim_{q\to \infty} \int_{\mathbb{T}^{2}}\partial_{i}\phi^{j}\mathbb{P}_{\neq 0}(u_{q}^{i}u_{q}^{j})\\
        &=\lim_{q\to \infty} \int_{\T^{2}}\partial_{i}\phi^{j}\sum_{m,n\leq q-1}\mathbb{P}_{\neq 0}\lp (u_{m+1}-u_{m})\otimes (u_{n+1}-u_{n}) \rp\\
        &=\int_{\T^{2}}\partial_{i}\phi^{j}\sum_{m,n}\mathbb{P}_{\neq 0}\lp (u_{m+1}-u_{m})\otimes (u_{n+1}-u_{n}) \rp\\
        &=:\int_{\T^{2}}\partial_{i}\phi^{j}\mathbb{P}_{\not = 0}\left(u^iu^j\right)
         \end{align*}
\end{itemize}
Notice the fourth equality is defined this way using Definitions~\ref{def:paras} and~\ref{def:para:solns}, and this definition makes sense due to \ref{i:ind:5}, while the first  equality is due to $\phi$ being divergence free. Hence we obtain
$$
\left\langle -\Delta \phi^j, u^j \right\rangle_{\dot{H}^{\epsilon},\dot{H}^{-\epsilon}} - \left\langle \partial_i \phi^j, \mathbb{P}_{\not = 0}(u^iu^j)\right\rangle_{\dot{H}^2,\dot{H}^{-2}} = -\lim_{q \to \infty} \int_{\mathbb{T}^{2}} \left(\partial_{i}\phi^{j}u_{q}^{i}u_{q}^{j}+\partial_{ii}\phi^{j}u_{q}^{j}\right) = 0
$$
and so $u$ is a nontrivial solution to \eqref{eq: SNSE} in the sense of Definition \ref{def:para:solns}.
\end{proof}

\section{Proof of Proposition~\ref{prop:ind}}\label{Section4}
Throughout this section, we will utilize a parameter $\lambda_{q+1}$, which will be chosen as a large power of $2$. The constraints which enforce our large choice of $\lambda_{q+1}$ are those contained in Definition \ref{def:increm}, ~\eqref{l:one},~\eqref{eq:indprop2:est}, \eqref{eq:indprop3:est1}, \eqref{eq:indprop3:est2}, \eqref{eq:wc2:est}, \eqref{eq:wcwp:est}, \eqref{eq:est:oldRL}, \eqref{eq:est:newRL}, \eqref{eq:est:offdiag_wp3}, \eqref{eq:wp1wp3:est_final}, \eqref{eq:wp2wp3:est_final}, \eqref{eq:lambda_ineq}, \eqref{eq:lambdaineq}, and \eqref{eq:indprop5:est1}.  Throughout this section, implicit constants may appear but they are independent of $\lambda_{q+1}$.

\subsection{Construction of \texorpdfstring{$w_{q+1}$}{wq1}}

We begin by specifying our fundamental building blocks, which are the usual intermittent Mikado flows. Our presentation here follows~\cite[Lemma~2.3]{DS2017} and~\cite[Lemma~6.7]{BV2020}.
\begin{lemma}[\textbf{Intermittent Mikado flows}]\label{lem:boldW}
Let $\lambda_{q+1} \in \mathbb{N}$ be given and $\epsilon_\gamma\in (0,1)$ be given such that $\lambda_{q+1}^{\epsilon_\gamma} \in \mathbb{N}$. For each $k \in \Lambda$ from Lemma~\ref{lem:geom}, there exists smooth $\mathbb{W}_{q+1}^{k^{\perp}}$ such that
\begin{enumerate}
        \item\label{w:1} $\operatorname{div}(\mathbb{W}_{q+1}^{k^{\perp}})=0$ and $\operatorname{div}(\mathbb{W}_{q+1}^{k^{\perp}} \otimes \mathbb{W}_{q+1}^{k^{\perp}})=0$.
        \item\label{w:2} $\mathbb{W}_{q+1}^{k^{\perp}}(x)$ is parallel to $k^{\perp}$ for all $x\in \mathbb{T}^{2}$.
        \item\label{w:3} $\fint_{\mathbb{T}^{2}}\mathbb{W}_{q+1}^{k^{\perp}}\otimes \mathbb{W}_{q+1}^{k^{\perp}} =k^{\perp}\otimes k^{\perp}$.
        \item\label{w:4} $\int_{\mathbb{T}^{2}} \mathbb{W}_{q+1}^{k^{\perp}} =0$.
        \item\label{w:5} $\| \nabla^m \mathbb{W}_{q+1}^{k^{\perp}} \|_{L^p(\T^2)} \lesssim \lambda_{q+1}^{(\epsilon_\gamma-1)\left( \frac 1p - \frac 12 \right)} \lambda_{q+1}^m $.
        \item\label{w:6} $\mathbb{W}_{q+1}^{k^{\perp}}$ is $\left(\frac{\mathbb{T}}{\lambda_{q+1}^{\epsilon_\gamma}}\right)^2$-periodic.
        \item\label{w:7} $\lim_{\lambda_{q+1}\rightarrow \infty} \| \mathbb{P}_{> \lambda_{q+1}^2}(\mathbb{W}_{q+1}^{k^{\perp}} \otimes \mathbb{W}_{q+1}^{k^{\perp}}) \|_{L^1} =0$. 
\end{enumerate}
\end{lemma}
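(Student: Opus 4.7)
The plan is to reduce the construction to a single one-dimensional concentrated profile, and then pull it back to $\mathbb{T}^2$ in the direction $k$. Fix a nonnegative $\eta\in C_c^\infty(\mathbb{R})$ supported in $[-1/2,1/2]$ with $\int_\mathbb{R}\eta^2=1$, set $\mu:=\lambda_{q+1}^{1-\epsilon_\gamma}$, and let $\Psi(t):=\mu^{1/2}\eta(\mu t)$ for $t\in[-1/2,1/2]$, extended $1$-periodically to $\mathbb{R}$. Then $\Psi$ is concentrated on an interval of width $\mu^{-1}$ with amplitude $\mu^{1/2}$, $\int_\mathbb{T}\Psi^2=1$, and $\|\Psi^{(m)}\|_{L^p(\mathbb{T})}\lesssim \mu^{m+1/2-1/p}$ for every $m\geq 0$ and $p\in[1,\infty]$. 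Subtracting off the mean $\bar\Psi=\int_\mathbb{T}\Psi=O(\mu^{-1/2})$ and renormalizing, I set $\tilde{\Psi}:=(\Psi-\bar\Psi)/(1-\bar\Psi^2)^{1/2}$; this is mean-zero with $\int_\mathbb{T}\tilde{\Psi}^2=1$, and for $\lambda_{q+1}$ large it inherits the same intermittent scaling as $\Psi$.

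Next, I define
\[
\mathbb{W}_{q+1}^{k^\perp}(x):= k^\perp\,\tilde{\Psi}\bigl(5\lambda_{q+1}^{\epsilon_\gamma}\,k\cdot x\bigr).
\]
Because $5\Lambda\subseteq\mathbb{Z}^2$ and $\lambda_{q+1}^{\epsilon_\gamma}\in\mathbb{N}$, the wavevector $5\lambda_{q+1}^{\epsilon_\gamma}k$ lies in $\mathbb{Z}^2\setminus\{0\}$. Item~\ref{w:2} holds by construction, item~\ref{w:1} follows from $k^\perp\cdot k=0$ applied to $\nabla\tilde\Psi(\cdots)$ and to $\nabla(\tilde\Psi^2)(\cdots)$, and item~\ref{w:6} follows because translating $x$ by $n/\lambda_{q+1}^{\epsilon_\gamma}$ shifts the argument of $\tilde\Psi$ by the integer $5k\cdot n$. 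For items~\ref{w:3} and~\ref{w:4} I invoke the elementary identity
\[
\fint_{\mathbb{T}^2} f(m\cdot x)\,dx = \int_\mathbb{T} f\,dt\quad\text{for every $1$-periodic $f$ and $m\in\mathbb{Z}^2\setminus\{0\}$,}
\]
which is proved by a unimodular change of variables based on a Bezout identity for $m/\gcd(m_1,m_2)$. Applied with $f=\tilde\Psi$ or $f=\tilde\Psi^2$, this gives $\fint\mathbb{W}_{q+1}^{k^\perp}=0$ and $\fint\mathbb{W}_{q+1}^{k^\perp}\otimes\mathbb{W}_{q+1}^{k^\perp}=k^\perp\otimes k^\perp$. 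The same identity reduces the $L^p(\mathbb{T}^2)$ norms of the field and its derivatives to the corresponding $L^p(\mathbb{T})$ norms of $\tilde\Psi$, and together with the chain-rule factor $|5\lambda_{q+1}^{\epsilon_\gamma}k|^m$ yields
\[
\|\nabla^m\mathbb{W}_{q+1}^{k^\perp}\|_{L^p(\mathbb{T}^2)}\lesssim \lambda_{q+1}^{m\epsilon_\gamma}\,\mu^{m+1/2-1/p}=\lambda_{q+1}^m\,\lambda_{q+1}^{(\epsilon_\gamma-1)(1/p-1/2)},
\]
which is item~\ref{w:5}.

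For item~\ref{w:7}, the Fourier coefficients of $\tilde\Psi$ inherit Schwartz decay from $\hat\eta$, since $\widehat{\tilde\Psi}(n)=\mu^{-1/2}\hat\eta(n/\mu)$ up to a correction at $n=0$, and convolving produces the same rapid decay for $\widehat{\tilde\Psi^2}$ beyond the scale $\mu=\lambda_{q+1}^{1-\epsilon_\gamma}$. All Fourier support of $\mathbb{W}_{q+1}^{k^\perp}\otimes\mathbb{W}_{q+1}^{k^\perp}$ lies on the lattice $\{5n\lambda_{q+1}^{\epsilon_\gamma}k:n\in\mathbb{Z}\}$, so $\mathbb{P}_{>\lambda_{q+1}^2}$ retains only modes with $|n|\gtrsim \lambda_{q+1}^{2-\epsilon_\gamma}\gg\mu$, whose $L^2$ (and hence $L^1$, via Cauchy--Schwarz on the bounded torus) contribution vanishes as $\lambda_{q+1}\to\infty$.

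I do not anticipate any serious obstacle here, as this is a standard $2$D intermittent Mikado construction adapted to the present notation, in the spirit of~\cite{DS2017,BV2020}. The only nuance requiring care is arranging the three scales $\lambda_{q+1}^{\epsilon_\gamma}$, $\mu=\lambda_{q+1}^{1-\epsilon_\gamma}$, and $\lambda_{q+1}$ so that their product structure simultaneously balances item~\ref{w:5} and cleanly separates the spectral concentration scale of $\tilde\Psi^2$ from the cutoff $\lambda_{q+1}^2$ appearing in item~\ref{w:7}.
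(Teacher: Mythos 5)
Your construction is correct and follows essentially the same Mikado-flow approach as the paper: build a 1D profile concentrated at scale $\lambda_{q+1}^{\epsilon_\gamma-1}$, normalize its $L^2$ mass, periodize, and compose along $k\cdot x$, with $5\Lambda\subset\mathbb{Z}^2$ and $\lambda_{q+1}^{\epsilon_\gamma}\in\mathbb{N}$ ensuring periodicity. The only cosmetic difference is that you start from a nonnegative bump and subtract the mean to achieve~\ref{w:4}, whereas the paper starts from a compactly supported profile $\phi_{q+1}$ already chosen with $\hat\phi_{q+1}(0)=0$; both choices yield the same scalings, and your Fourier-decay justification of~\ref{w:7} is in fact a bit more explicit than the one-line remark in the paper.
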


\begin{proof}
    Fix $k \in \Lambda$ and $\phi_{q+1} \in C^{\infty}(\R)$ with $\hat{\phi}_{q+1}(0) = 0$ and spatial support contained in $(0,1)$. Put
    $$
    \psi_{q+1}^{k^{\perp}}(x)k^{\perp} = \phi_{q+1}(k \cdot x) k^{\perp}.
    $$
    Now periodize $\lambda_{q+1}^{\frac{1-\epsilon}{2}} \psi_{q+1}^{k^{\perp}}(\lambda_{q+1}^{1-\epsilon}x)k^{\perp}$ so that it is $\Z^2$-periodic (this is possible since $5\Lambda \subset \Z^2$) and set this to be $\chi_{q+1}^{k^{\perp}}(x) k^{\perp}$. Finally set $\rho_{q+1}^{k^{\perp}}(x) = \chi_{q+1}^{k^{\perp}}(\lambda_{q+1}^{\epsilon}x)$ and then put
    $$
    \mathbb{W}_{q+1}^{k^{\perp}}(x) = \rho_{q+1}^{k^{\perp}}(x) k^{\perp}.
    $$
    We claim this will have all of the desired properties. Items~\ref{w:1},~\ref{w:2},~\ref{w:4}, and~\ref{w:6} are all immediate. For~\ref{w:3}, we simply modify our choice of $\phi_{q+1}$ so that the $L^2$ norm of $\rho_{q+1}^{k^{\perp}}$ is normalized. Then finally for~\ref{i:ind:5}, note that the support of $\mathbb{W}_{q+1}^{k^{\perp}}$ in $\mathbb{T}^2$ will consist of $\lambda_{q+1}^{\epsilon}$ parallelograms of length at most $\sqrt{2}$ and of width $\lambda_{q+1}^{-1}$. Hence
    $$
    \Vert \mathbb{W}_{q+1}^{k^{\perp}} \Vert_{L^p(\mathbb{T}^2)} \lesssim \lambda_{q+1}^{\frac{\epsilon}{p}} \lambda_{q+1}^{-\frac{1}{p}} \lambda_{q+1}^{\frac{1-\epsilon}{2}} = \lambda_{q+1}^{(\epsilon - 1)\left(\frac{1}{p} - \frac{1}{2}\right)}.
    $$
    Since the maximum frequency of $\mathbb{W}_{q+1}^{k^{\perp}}$ is (morally speaking) of order $\lambda_{q+1}$, derivatives correspond to multiplication by $\lambda_{q+1}$, up to implicit constants. This observation in addition to the previous computation gives~\ref{w:5}.  The last estimate follows from the fact that derivatives on $\mathbb{W}_{q+1}$ cost $\lambda_{q+1} \ll \lambda_{q+1}^2$ as $\lambda_{q+1} \rightarrow \infty$.
\end{proof}

Next, we define the velocity increment $w_{q+1}:=u_{q+1}-u_{q}$.
\begin{definition}[\textbf{Definition of $w_{q+1}$}]\label{def:increm}
    Consider the functions $\ga_{k}$ and the set $\Lambda$ from Lemma \ref{lem:geom} and $\mathbb{W}_{q+1}^{k^{\perp}}=\rho^{k^{\perp}}_{q+1}k^{\perp}$ from Lemma \ref{lem:boldW} with parameter choices $\lambda_{q+1}$ a very large power of $2$, and $\epsilon_{\gamma} \in (0,1)$ chosen such that $\lambda_{q+1}^{\epsilon_{\gamma}}$ is an integer and $\epsilon_\gamma< \epsilon$ from Lemma~\ref{lem:geom}. Let 
    \begin{equation}\label{eq:akRq}
        a_k(R_q) = \left(\epsilon_{\gamma}^{-1} \Vert R_q \Vert_{L^{\infty}}\right)^{\sfrac 12} \gamma_k\left(I - \frac{\epsilon_{\gamma} R_q}{\Vert R_q \Vert_{L^{\infty}}}\right).
    \end{equation}
    We define the increment
    \begin{equation} \label{eq:increm1}
        w_{q+1}:=\nabla^{\perp}\left( \frac{2}{5\pi} \sum_{k \in \Lambda} \mathbb{P}_{\leq \lambda_{q+1}}\left(a_k(R_q)\right) \mathbb{P}_{\leq \lambda_{q+1}^2}\left(\rho_{q+1}^{k^{\perp}}\right) \frac{\sin\left(\frac{5\pi}{2}\lambda_{q+1}^6 k \cdot x\right)}{\lambda_{q+1}^6}\right).
    \end{equation}
\end{definition}

Applying the product rule to the increment (\ref{eq:increm1}), we obtain the following decomposition of $w_{q+1}$:
\begin{equation} \notag
    w_{q+1}=w_{q+1}^{(c)}+w_{q+1}^{(p)},
\end{equation}
where
\begin{equation}\label{eq:corrector}
    w_{q+1}^{(c)} = \frac{2}{5\pi} \sum_{k \in \Lambda} \nabla^{\perp}\left(\mathbb{P}_{\leq \lambda_{q+1}}\left(a_k(R_q)\right) \mathbb{P}_{\leq \lambda_{q+1}^2}\left(\rho_{q+1}^{k^{\perp}}\right)\right) \frac{\sin\left(\frac{5\pi}{2}\lambda_{q+1}^6 k \cdot x\right)}{\lambda_{q+1}^6}
\end{equation}
and
\begin{equation}\label{eq:principal}
    w_{q+1}^{(p)} = \sum_{k \in \Lambda} \mathbb{P}_{\leq \lambda_{q+1}}\left(a_k(R_q)\right) \mathbb{P}_{\leq \lambda_{q+1}^2}\left(\rho_{q+1}^{k^{\perp}}\right) \cos\left(\frac{5\pi}{2}\lambda_{q+1}^6 k \cdot x\right) k^{\perp}.
\end{equation}    
We refer to $w_{q+1}^{(c)}$ as the \textit{corrector component} and $w_{q+1}^{(p)}$ as the \textit{principal component}. We now prove $L^p$ bounds for each of these components, as well as the increment itself.
\begin{lemma}[\textbf{Bounds for $w_{q+1}$}]
Let $w_{q+1}$, $w_{q+1}^{(c)}$, and $w_{q+1}^{(p)}$ be as in \eqref{eq:increm1}, \eqref{eq:corrector}, and \eqref{eq:principal}. Then for a sufficiently large choice of $\lambda_{q+1}$ and all $1 \leq p \leq \infty$,
\begin{equation}\label{eq:w^c:est}
        \Vert w_{q+1}^{(c)} \Vert_{L^p} \lesssim \lambda_{q+1}^{-5} 
    \end{equation}
    \begin{equation}\label{eq:w^p:est}
        \Vert w_{q+1}^{(p)} \Vert_{L^p} \lesssim \lambda_{q+1}^{(\epsilon_{\gamma} - 1)\left(\frac{1}{p} - \frac{1}{2}\right)}.
    \end{equation}
    Therefore we also have
    \begin{equation}\label{eq:w:est}
        \Vert w_{q+1} \Vert_{L^p} \lesssim \lambda_{q+1}^{-5} + \lambda_{q+1}^{(\epsilon_{\gamma} - 1)\left(\frac{1}{p} - \frac{1}{2}\right)}.
    \end{equation}
\end{lemma}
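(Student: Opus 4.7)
The bound on $w_{q+1}$ follows from the triangle inequality once the estimates on $w_{q+1}^{(p)}$ and $w_{q+1}^{(c)}$ are established separately, so the plan focuses on those two.

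For the principal component~\eqref{eq:principal}, I would apply Minkowski over the finite set $\Lambda$, trivially dominate the oscillation $\cos(\frac{5\pi}{2}\lambda_{q+1}^{6} k\cdot x)$ and the direction $k^{\perp}$ by constants in $L^\infty$, and then apply H\"older to pair the amplitude against the Mikado factor. For the amplitude, Lemma~\ref{lem:proj} gives $\|\mathbb{P}_{\leq \lambda_{q+1}}(a_k(R_q))\|_{L^\infty}\lesssim \|a_k(R_q)\|_{L^\infty}$, and the explicit formula \eqref{eq:akRq} together with smoothness of $\gamma_k$ on $B(I,\epsilon)$ produces a finite bound depending on $\|R_q\|_{L^\infty}$ (we use $\epsilon_\gamma<\epsilon$ so that $I-\epsilon_\gamma R_q/\|R_q\|_{L^\infty}$ stays in the domain of $\gamma_k$). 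For the building-block factor, Lemma~\ref{lem:proj} again removes the outer projection up to a constant, after which item~\ref{w:5} of Lemma~\ref{lem:boldW} (with $m=0$) provides the $\lambda_{q+1}^{(\epsilon_\gamma-1)(\frac{1}{p}-\frac{1}{2})}$ factor.

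For the corrector component~\eqref{eq:corrector} I would distribute $\nabla^{\perp}$ via Leibniz, so that every resulting term carries a derivative either on $\mathbb{P}_{\leq \lambda_{q+1}}(a_k(R_q))$ or on $\mathbb{P}_{\leq \lambda_{q+1}^{2}}(\rho_{q+1}^{k^{\perp}})$. Bernstein's inequality bounds the first kind of derivative in $L^\infty$ by $\lambda_{q+1}\,\|a_k(R_q)\|_{L^\infty}$. The second kind is the slightly more delicate one: although the projector is $\mathbb{P}_{\leq \lambda_{q+1}^{2}}$, the function $\rho_{q+1}^{k^{\perp}}$ has its spectrum localized near $|\xi|\sim\lambda_{q+1}\ll \lambda_{q+1}^2$ by the construction in Lemma~\ref{lem:boldW}, so that for $\lambda_{q+1}$ sufficiently large the outer projection acts as the identity on it, and we may appeal directly to item~\ref{w:5} with $m=1$ to obtain $\|\nabla\rho_{q+1}^{k^{\perp}}\|_{L^p}\lesssim \lambda_{q+1}\cdot \lambda_{q+1}^{(\epsilon_\gamma-1)(\frac{1}{p}-\frac{1}{2})}$. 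Combining the two contributions, each term in the Leibniz expansion is bounded by $\lambda_{q+1}$ times the estimate from the principal component analysis. Dividing through by the prefactor $\lambda_{q+1}^{-6}$ then yields the claimed $\lambda_{q+1}^{-5}$ bound, where for $p>2$ the residual factor $\lambda_{q+1}^{(\epsilon_\gamma-1)(\frac{1}{p}-\frac{1}{2})}$ is of size $\lambda_{q+1}^{O(1-\epsilon_\gamma)}$ and is absorbed by enlarging $\lambda_{q+1}$.

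The main (mild) obstacle I anticipate is precisely the book-keeping in the previous paragraph: the natural Bernstein bound on a derivative of $\mathbb{P}_{\leq \lambda_{q+1}^{2}}(\rho_{q+1}^{k^{\perp}})$ is $\lambda_{q+1}^{2}$, which would only give a $\lambda_{q+1}^{-4}$ bound on the corrector and not the claimed $\lambda_{q+1}^{-5}$. Recognising that the actual spectral support of $\rho_{q+1}^{k^{\perp}}$ is a factor of $\lambda_{q+1}$ smaller than the outer projection threshold, and using that to save the extra $\lambda_{q+1}$, is the one non-routine point; the rest reduces to $L^p$ boundedness of Littlewood-Paley projectors (Lemma~\ref{lem:proj}), H\"older's inequality, and the building-block estimates of Lemma~\ref{lem:boldW}.
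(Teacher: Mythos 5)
Your estimate for $w_{q+1}^{(p)}$ is identical to the paper's: Minkowski over $\Lambda$, H\"older with the amplitude and oscillation in $L^\infty$, then Lemma~\ref{lem:proj} and item~\ref{w:5} of Lemma~\ref{lem:boldW}. For $w_{q+1}^{(c)}$ you also split by Leibniz, but the ``mild obstacle'' you flag is a non-issue in the paper's argument, and your proposed workaround introduces an imprecision. Since $\nabla^\perp$ and $\mathbb{P}_{\leq\lambda}$ are both Fourier multipliers they commute, so the paper simply writes $\nabla^\perp\mathbb{P}_{\leq\lambda_{q+1}^2}(\rho_{q+1}^{k^\perp}) = \mathbb{P}_{\leq\lambda_{q+1}^2}(\nabla^\perp\rho_{q+1}^{k^\perp})$, applies the $\lambda$-uniform $L^p$-boundedness of $\mathbb{P}_{\leq\lambda}$ from Lemma~\ref{lem:proj}, and then invokes item~\ref{w:5} with $m=1$ --- no Bernstein and no need to argue about where the spectrum of $\rho_{q+1}^{k^\perp}$ sits. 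Your substitute claim that the outer projection ``acts as the identity'' on $\rho_{q+1}^{k^\perp}$ is not literally true: $\rho_{q+1}^{k^\perp}$ is built from a compactly supported physical-space bump $\phi_{q+1}$, so its Fourier transform has nonvanishing (rapidly decaying) tails past $\lambda_{q+1}^2$. The same commutation is used on the amplitude factor, giving $\|\mathbb{P}_{\leq\lambda_{q+1}}(\nabla^\perp a_k(R_q))\|_{L^\infty}\lesssim\|\nabla^\perp a_k(R_q)\|_{L^\infty}=O(1)$ since $a_k(R_q)$ is a fixed smooth function independent of $\lambda_{q+1}$; your Bernstein bound $\lambda_{q+1}\|a_k(R_q)\|_{L^\infty}$ is valid but loses a gratuitous factor of $\lambda_{q+1}$.

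Separately, your final remark contains a genuine (if minor) error: the residual factor $\lambda_{q+1}^{(\epsilon_\gamma-1)(\frac{1}{p}-\frac{1}{2})}$ for $p>2$ is a \emph{positive} power of $\lambda_{q+1}$ and cannot be ``absorbed by enlarging $\lambda_{q+1}$.'' What actually saves the estimate is that this exponent is at most $(1-\epsilon_\gamma)/2<1/2$, so the corrector is $\lesssim\lambda_{q+1}^{-5+(1-\epsilon_\gamma)/2}$; this is still a strictly negative power, which is all that is needed downstream, and in~\eqref{eq:w:est} the corrector contribution is anyway dominated by the principal term when $p>2$. (The displayed $\lambda_{q+1}^{-5}$ in~\eqref{eq:w^c:est} is itself a slight overstatement for $p>2$ for exactly this reason; the precise exponent is irrelevant, only its negativity.)
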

\begin{proof}
    Applying Lemma \ref{lem:proj} we have
    \begin{equation*}
    \begin{split}
        \Vert w_{q+1}^{(c)} \Vert_{L^{p}} &\leq \sum_{k \in \Lambda} \left\Vert \mathbb{P}_{\leq \lambda_{q+1}}\left(\nabla^{\perp} a_k(R_q)\right)\right\Vert_{L^{\infty}} \left\Vert \mathbb{P}_{\leq \lambda_{q+1}^2}\left(\rho_{q+1}^{k^{\perp}}\right)\right\Vert_{L^p} \left\Vert \frac{\sin\left(\frac{5\pi}{2}\lambda_{q+1}^6 k \cdot x\right)}{\lambda_{q+1}^6} \right\Vert_{L^{\infty}}\\
        &+ \sum_{k \in \Lambda} \left\Vert \mathbb{P}_{\leq \lambda_{q+1}}\left(a_k(R_q)\right) \right\Vert_{L^{\infty}} \left\Vert \mathbb{P}_{\leq \lambda_{q+1}^2}\left(\nabla^{\perp}\rho_{q+1}^{k^{\perp}}\right)\right\Vert_{L^{p}} \left\Vert \frac{\sin\left(\frac{5\pi}{2}\lambda_{q+1}^6 k \cdot x\right)}{\lambda_{q+1}^6}\right\Vert_{L^{\infty}}\\
        &\lesssim \lambda_{q+1}^{-5}
    \end{split}
\end{equation*}
and now applying both Lemma \ref{lem:proj} and Lemma \ref{lem:boldW} we have
\begin{equation*}
\begin{split}
    \Vert w_{q+1}^{(p)} \Vert_{L^{p}} &\lesssim \sum_{k \in \Lambda} \left\Vert \mathbb{P}_{\leq \lambda_{q+1}}\left(a_k(R_q)\right) \right\Vert_{L^{\infty}}  \left\Vert \mathbb{P}_{\leq \lambda_{q+1}^2}\left(\mathbb{W}_{q+1}^{k^{\perp}}\right)\right\Vert_{L^{p}} \left\Vert \cos\left(\frac{5\pi}{2}\lambda_{q+1}^6 k \cdot x\right) \right\Vert_{L^{\infty}}\\
    &\lesssim \lambda_{q+1}^{(\epsilon_{\gamma} - 1)\left(\frac{1}{p} - \frac{1}{2}\right)}.
    \end{split}
\end{equation*}
Then~\eqref{eq:w:est} follows immediately from these two estimates.
\end{proof}

\subsection{Proof of item~\ref{i:ind:1}}
Using the assumption $\operatorname{div}(u_q) = 0$ as well as \eqref{eq:increm1} we have
$$
\operatorname{div}(u_{q+1}) = \operatorname{div}(u_{q}) + \operatorname{div}(w_{q+1}) = 0
$$
and
\begin{equation*}
    \begin{split}
        \operatorname{div}(u_{q+1} \otimes u_{q+1}) &- \Delta u_{q+1} \\
        &= \operatorname{div}(w_{q+1} \otimes w_{q+1}) + \operatorname{div}(w_{q+1} \otimes u_q + u_q \otimes w_{q+1}) + \operatorname{div}(u_q \otimes u_q)\\
        &- \Delta w_{q+1} - \Delta u_q\\
        &= \operatorname{div}(w_{q+1} \otimes w_{q+1}) + \operatorname{div}(w_{q+1} \otimes u_q + u_q \otimes w_{q+1})\\
        &+ \operatorname{div}(R_q) - \nabla p_q + \Delta u_q - \Delta w_{q+1} - \Delta u_q\\
        &= \operatorname{div}(R_q + w_{q+1} \otimes w_{q+1}) - \nabla p_q + \operatorname{div}(w_{q+1} \otimes u_q + u_q \otimes w_{q+1}) - \Delta w_{q+1} \, .
    \end{split}
\end{equation*}
Setting
\begin{equation}\label{eq:Rq+1}
    R_{q+1} := \underbrace{R_q + w_{q+1} \otimes w_{q+1}}_{\textnormal{nonlinear error}} + \underbrace{w_{q+1} \otimes u_q + u_q \otimes w_{q+1}}_{\textnormal{Nash error}} -  \underbrace{(\nabla w_{q+1} + \nabla w_{q+1}^T)}_{\textnormal{dissipation error}}
\end{equation}
and using the identity
$$
\operatorname{div}\left(\nabla w_{q+1} + \nabla w_{q+1}^T\right) = \Delta w_{q+1} + \nabla(\operatorname{div}(w_{q+1})) = \Delta w_{q+1}
$$
shows that $(u_{q+1},R_{q+1})$ solves \eqref{eq:RS} with $p_{q+1} = p_q$,\footnote{Since we are not trying to show convergence of $u_q$ in $L^2$, the ``pressure increment'' which must be non-constant in standard intermittent Nash iterations for Euler or Navier-Stokes is a constant here.  This is visible in~\eqref{eq:akRq}, where we have used the rough normalization $\|R_q \|_{L^\infty}^{\sfrac 12}$.  Hence this method produces pressureless solutions if desired.} where $R_{q+1}$ is symmetric by inspection.

\subsection{Proof of item~\ref{i:ind:2} at level \texorpdfstring{$q+1$}{p1}}
Clearly
$$
\fint_{\mathbb{T}^2} u_{q+1}\, dx = \fint_{\mathbb{T}^2} w_{q+1}\, dx + \fint_{\mathbb{T}^2} u_q\, dx = 0
$$
since $u_{q}$ is assumed to have zero mean and $w_{q+1}$ is the perpendicular gradient of a smooth function. Note that~\eqref{e:w:est} is implied by the assumption that for all $q' \leq q$,
\begin{equation}\label{l:one}
\Vert w_{q'} \Vert_{L^{p(q')}} < 2^{-q'-2} \, .
\end{equation}
The desired inequality for $q' = q+1$ follows from \eqref{eq:w:est} and large enough $\lambda_{q+1}$ since $p(q+1) < 2$. Next, we need to show that
$$
C^{-1}(1 + 2^{-q-1}) < \Vert u_{q+1} \Vert_{L^{p(q+1)}} \, .
$$
Using the same bound at level $q$, we have
\begin{equation}\label{eq:indprop2:est}
    \begin{split}
        \Vert u_{q+1} \Vert_{L^{p(q+1)}} &\geq \Vert u_q - (-w_{q+1}) \Vert_{L^{p(q)}}\\
        &\geq \Vert u_q \Vert_{L^{p(q)}} - \Vert w_{q+1} \Vert_{L^{p(q)}}\\
        &\geq C^{-1}(1 + 2^{-q}) - \Vert w_{q+1} \Vert_{L^{p(q)}} \, .
    \end{split}
\end{equation}
Choosing $\lambda_{q+1}$ large enough such that $\Vert w_{q+1} \Vert_{L^{p(q)}} < C^{-1}2^{-q-1}$ will then be sufficient to give the lower bound. 

\subsection{Proof of item~\ref{i:ind:3}}
First, note that every term on the right hand side of \eqref{eq:Rq+1} is $C^{\infty}$ (by inductive assumption and by construction of $w_{q+1}$), so $R_{q+1}$ is as well. Now to show $\Vert R_{q+1} \Vert_{\dot{H}^{-2}} < 2^{-q-11}$, we analyze each term on the right hand side of \eqref{eq:Rq+1} separately.
\bigskip

\noindent\texttt{Nash error: } Using \eqref{eq:w:est} for $p = 1$ we compute 
\begin{align}
\|w_{q+1}\otimes u_{q} + u_q \otimes w_{q+1}\|_{\dot{H}^{-2}} &\lesssim \|w_{q+1}\otimes u_{q} + u_q \otimes w_{q+1}\|_{L^{1}} \notag \\
&\leq \|u_{q}\|_{L^{\infty}}\|w_{q+1}\|_{L^{1}} \notag \\
&\lesssim \lambda_{q+1}^{-5} + \lambda_{q+1}^{\frac{\epsilon_{\gamma}-1}{2}} \notag \\
&< 2^{-q-15} \, . \label{eq:indprop3:est1}
\end{align}
Note that we used the fact that $u_{q}$ is qualitatively smooth as well as the fact that $\lambda_{q+1}$ is large enough to ensure the final inequality, as well as the embedding $L^1 \subset \dot{H}^{-2}$.
\bigskip

\noindent\texttt{Dissipation error: } Again using \eqref{eq:w:est} for $p = 4/3$ we have
\begin{equation}\label{eq:indprop3:est2}
    \begin{split}
        \Vert \nabla w_{q+1} + \nabla w_{q+1}^T \Vert_{\dot{H}^{-2}}
        &\lesssim \Vert w_{q+1} \Vert_{\dot{H}^{-1}}\\
        &\lesssim \Vert w_{q+1} \Vert_{L^{4/3}}\\
        &\lesssim \lambda_{q+1}^{-5} +  \lambda_{q+1}^{\frac{\epsilon_{\gamma}-1}{4}}\\
        &< 2^{-q-15} \, .
    \end{split}
\end{equation}
Note that $\lambda_{q+1}$ must be large enough to ensure the final inequality.
\bigskip

\noindent\texttt{Nonlinear error:} We first write
\begin{equation}\label{eq:wq+1^2}
    w_{q+1} \otimes w_{q+1} = w_{q+1}^{(c)} \otimes w_{q+1}^{(c)} + w_{q+1}^{(p)} \otimes w_{q+1}^{(c)} + w_{q+1}^{(c)} \otimes w_{q+1}^{(p)} + w_{q+1}^{(p)} \otimes w_{q+1}^{(p)}.
\end{equation}
Any term in \eqref{eq:wq+1^2} which contains $w_{q+1}^{(c)}$ can be estimated easily using the strong decay property of $w_{q+1}^{(c)}$ from \eqref{eq:w^c:est}. For instance we have
\begin{equation}\label{eq:wc2:est}
    \Vert w_{q+1}^{(c)} \otimes w_{q+1}^{(c)} \Vert_{\dot{H}^{-2}} \lesssim \Vert w_{q+1}^{(c)} \otimes w_{q+1}^{(c)} \Vert_{L^\infty} \lesssim \Vert w_{q+1}^{(c)} \Vert_{L^{\infty}}^2 \lesssim \lambda_{q+1}^{-10} < 2^{-2q-100}
\end{equation}
and
\begin{equation}\label{eq:wcwp:est}
    \begin{split}
        \Vert w_{q+1}^{(p)} \otimes w_{q+1}^{(c)} + w_{q+1}^{(c)} \otimes w_{q+1}^{(p)} \Vert_{\dot{H}^{-2}} &\lesssim \Vert w_{q+1}^{(p)} \otimes w_{q+1}^{(c)} \Vert_{L^1}\\
        &\lesssim \Vert w_{q+1}^{(p)} \Vert_{L^2} \Vert w_{q+1}^{(c)} \Vert_{L^2}\\
        &\lesssim \lambda_{q+1}^{-5}\\
        &< 2^{-2q-100} \, ,
    \end{split}
\end{equation}
where we choose $\lambda_{q+1}$ large enough such that the final inequalities in \eqref{eq:wc2:est} and \eqref{eq:wcwp:est} hold. The final remaining term of \eqref{eq:wq+1^2} will require the following further decomposition of the principal component:
\begin{equation}\label{eq:wp-decomp}
    \begin{split}
        w_{q+1}^{(p)} &= - \sum_{k \in \Lambda} \mathbb{P}_{> \lambda_{q+1}}(a_k(R_q)) \mathbb{P}_{\leq \lambda_{q+1}^2}\left(\mathbb{W}_{q+1}^{k^{\perp}}\right) \cos\left(\frac{5\pi}{2}\lambda_{q+1}^6 k\cdot x\right)\\
        &- \sum_{k \in \Lambda} a_k(R_q) \mathbb{P}_{> \lambda_{q+1}^2}\left(\mathbb{W}_{q+1}^{k^{\perp}}\right) \cos\left(\frac{5\pi}{2}\lambda_{q+1}^6 k\cdot x\right)\\
        &+ \sum_{k \in \Lambda} a_k(R_q) \mathbb{W}_{q+1}^{k^{\perp}} \cos\left(\frac{5\pi}{2}\lambda_{q+1}^6 k\cdot x\right)\\
        :=&  w_{q+1}^{(p,1)} + w_{q+1}^{(p,2)} + w_{q+1}^{(p,3)}
    \end{split}
\end{equation}
We start with $w^{(p,3)}_{q+1} \otimes w^{(p,3)}_{q+1}$. We have
\begin{equation}\label{eq:wp3^2}
\begin{split}
   w^{(p,3)}_{q+1} \otimes w^{(p,3)}_{q+1} &=  \sum_{k \in \Lambda} a_k^2(R_q) \mathbb{W}_{q+1}^{k^{\perp}} \otimes \mathbb{W}_{q+1}^{k^{\perp}} \cos^2\left(\frac{5\pi}{2}\lambda_{q+1}^6 k\cdot x\right)\\
   &+ \sum_{k_1 \not = k_2} a_{k_1}(R_q)a_{k_2}(R_q) \mathbb{W}_{q+1}^{k_1^{\perp}} \otimes \mathbb{W}_{q+1}^{k_2^{\perp}} \cos\left(\frac{5\pi}{2}\lambda_{q+1}^6 k_1\cdot x\right)\cos\left(\frac{5\pi}{2}\lambda_{q+1}^6 k_2\cdot x\right)
   \end{split}
\end{equation}
We apply a trigonometric identity to the first term on the right hand side of \eqref{eq:wp3^2} to get
\begin{equation}\label{eq:diagterm}
    \begin{split}
        \sum_{k \in \Lambda} a_k^2(R_q) \mathbb{W}_{q+1}^{k^{\perp}} \otimes \mathbb{W}_{q+1}^{k^{\perp}} \cos^2\left(\frac{5\pi}{2}\lambda_{q+1}^6 k\cdot x\right) &= \sum_{k \in \Lambda} \frac{1}{2} a_k^2(R_q) \mathbb{W}_{q+1}^{k^{\perp}} \otimes \mathbb{W}_{q+1}^{k^{\perp}}\\
        &+ \sum_{k \in \Lambda} \frac{1}{2}a_k^2(R_q) \mathbb{W}_{q+1}^{k^{\perp}} \otimes \mathbb{W}_{q+1}^{k^{\perp}} \cos\left(5\pi\lambda_{q+1}^6 k\cdot x\right)
    \end{split}
\end{equation}
From Lemma~\ref{lem:boldW} we have
\begin{equation}\label{eq:meanWq+1^2}
    \sum_{k \in \Lambda} \frac{1}{2} a_k^2(R_q) \mathbb{W}_{q+1}^{k^{\perp}} \otimes \mathbb{W}_{q+1}^{k^{\perp}} =  \sum_{k \in \Lambda} \frac{1}{2} a_k^2(R_q) k^{\perp} \otimes k^{\perp} + \sum_{k \in \Lambda} \frac{1}{2} a_k^2(R_q) \mathbb{P}_{\not = 0}\left(\mathbb{W}_{q+1}^{k^{\perp}} \otimes \mathbb{W}_{q+1}^{k^{\perp}}\right) \, ,
\end{equation}
then using \eqref{eq:akRq} and Lemma \ref{lem:geom} we can write \eqref{eq:meanWq+1^2} as
\begin{equation}
    \sum_{k \in \Lambda} \frac{1}{2} a_k^2(R_q) \mathbb{W}_{q+1}^{k^{\perp}} \otimes \mathbb{W}_{q+1}^{k^{\perp}} = \epsilon_{\gamma}^{-1}\Vert R_q \Vert_{L^{\infty}}I - R_q + \sum_{k \in \Lambda} \frac{1}{2} a_k^2(R_q) \mathbb{P}_{\not = 0}\left(\mathbb{W}_{q+1}^{k^{\perp}} \otimes \mathbb{W}_{q+1}^{k^{\perp}}\right) \, .
\end{equation}
Therefore adding $R_q$ to~\eqref{eq:diagterm}, we obtain
\begin{equation}
\begin{split}
    R_q + \eqref{eq:diagterm}&= R_q + \sum_{k \in \Lambda} a_k^2(R_q) \mathbb{W}_{q+1}^{k^{\perp}} \otimes \mathbb{W}_{q+1}^{k^{\perp}} \cos^2\left(\frac{5\pi}{2}\lambda_{q+1}^6 k\cdot x\right) \\
    &= \epsilon_{\gamma}^{-1}\Vert R_q \Vert_{L^{\infty}}I + \sum_{k \in \Lambda} \frac{1}{2} a_k^2(R_q) \mathbb{P}_{\not = 0}\left(\mathbb{W}_{q+1}^{k^{\perp}} \otimes \mathbb{W}_{q+1}^{k^{\perp}}\right)\\
    &+ \sum_{k \in \Lambda} \frac{1}{2}a_k^2(R_q) \mathbb{W}_{q+1}^{k^{\perp}} \otimes \mathbb{W}_{q+1}^{k^{\perp}} \cos\left(5\pi\lambda_{q+1}^6 k \cdot x\right)
    \end{split}
\end{equation}
Since $\epsilon_{\gamma}^{-1} \Vert R_q \Vert_{L^{\infty}}I$ is constant, then upon taking the $\dot{H}^{-2}$ norm and applying triangle inequality this term vanishes. Now using Lemma~\ref{lem:RL2}  we may choose $\lambda_{q+1}$ large enough to ensure that
\begin{equation}\label{eq:est:oldRL}
    \left\Vert \sum_{k \in \Lambda} \frac{1}{2} a_k^2(R_q) \mathbb{P}_{\not = 0}\left(\mathbb{W}_{q+1}^{k^{\perp}} \otimes \mathbb{W}_{q+1}^{k^{\perp}}\right) \right\Vert_{\dot{H}^{-2}} < 2^{-2q-100} \, ,
\end{equation}
and similarly by applying Lemma \ref{lem:RL} we get for large enough $\lambda_{q+1}$ that
\begin{equation}\label{eq:est:newRL}
    \left\Vert \sum_{k \in \Lambda} \frac{1}{2}a_k^2(R_q) \mathbb{W}_{q+1}^{k^{\perp}} \otimes \mathbb{W}_{q+1}^{k^{\perp}} \cos\left(5\pi\lambda_{q+1}^6 k \cdot x\right) \right\Vert_{\dot{H}^{-2}} < 2^{-2q-100}
\end{equation}
Now we return to examine the second term in \eqref{eq:wp3^2}.\footnote{This bound follows as Lemma 5.2 in \cite{BBV2020} or Remark 5.4 in \cite{BN2023}.} Using the fact the size of the support of each $\mathbb{W}_{q+1}^{k^{\perp}}$ is $\lambda_{q+1}^{\epsilon_{\gamma}-1}$, the orthogonality of the support sets, and Lemma \ref{lem:boldW} we get
\begin{equation}\label{eq:est:offdiag_wp3}
    \begin{split}
        &\left\Vert \sum_{k_1 \not = k_2} a_{k_1}(R_q)a_{k_2}(R_q) \mathbb{W}_{q+1}^{k_1^{\perp}} \otimes \mathbb{W}_{q+1}^{k_2^{\perp}} \cos\left(\frac{5\pi}{2}\lambda_{q+1}^6 k_1\cdot x\right)\cos\left(\frac{5\pi}{2}\lambda_{q+1}^6 k_2\cdot x\right) \right\Vert_{\dot{H}^{-2}}\\
        \lesssim& \sum_{k_1 \not = k_2} \left\Vert  a_{k_1}(R_q)a_{k_2}(R_q) \mathbb{W}_{q+1}^{k_1^{\perp}} \otimes \mathbb{W}_{q+1}^{k_2^{\perp}} \cos\left(\frac{5\pi}{2}\lambda_{q+1}^6 k_1\cdot x\right)\cos\left(\frac{5\pi}{2}\lambda_{q+1}^6 k_2\cdot x\right) \right\Vert_{L^1}\\
        \lesssim& \sum_{k_1 \not = k_2} \left\Vert \mathbb{W}_{q+1}^{k_1^{\perp}} \otimes \mathbb{W}_{q+1}^{k_2^{\perp}} \right\Vert_{L^1}\\
        \lesssim& \sum_{k_1 \not = k_2} \left|\operatorname{supp}\left(W_{q+1}^{k_1^{\perp}}\right)\right| \left|\operatorname{supp}\left(W_{q+1}^{k_2^{\perp}}\right)\right| \left\Vert W_{q+1}^{k_1^{\perp}} \right\Vert_{L^{\infty}} \left\Vert W_{q+1}^{k_2^{\perp}} \right\Vert_{L^{\infty}}\\ 
        \lesssim& \left(\lambda_{q+1}^{\epsilon_{\gamma} - 1}\right)^2\left(\lambda_{q+1}^{\frac{1 - \epsilon_{\gamma}}{2}}\right)^2\\
        =& \lambda_{q+1}^{\epsilon_{\gamma} - 1}\\
        <& 2^{-2q - 100}.
    \end{split}
\end{equation}
So combining \eqref{eq:wp3^2}, \eqref{eq:est:oldRL}, \eqref{eq:est:newRL}, and \eqref{eq:est:offdiag_wp3} we arrive at
\begin{equation}\label{eq:est:wp3^2}
    \left\Vert R_q + w^{(p,3)}_{q+1} \otimes w^{(p,3)}_{q+1} \right\Vert_{\dot{H}^{-2}} < 2^{-2q-98} \, .
\end{equation}
Now we estimate the rest of the terms in \eqref{eq:wp-decomp}. As was the case when estimating \eqref{eq:wc2:est} and \eqref{eq:wcwp:est}, we will utilize the strong estimates on $w_{q+1}^{(p,1)}$ and $w_{q+1}^{(p,2)}$ to deduce the desired inequalities. For this exact reason, it suffices to only prove the desired inequalities for $w^{(p,1)}_{q+1} \otimes w^{(p,3)}_{q+1}$ and $w^{(p,2)}_{q+1} \otimes w^{(p,3)}_{q+1}$ since all remaining terms are at least as small as these two terms. So we have 
\begin{equation}\label{eq:wp1wp3:est}
    \begin{split}
        \Vert w^{(p,1)}_{q+1} \otimes w^{(p,3)}_{q+1} \Vert_{\dot{H}^{-2}} &\lesssim \sum_{k_1,k_2 \in \Lambda} \left\Vert \mathbb{P}_{> \lambda_{q+1}}(a_{k_1}(R_q)) a_{k_2}(R_q) \mathbb{P}_{\leq \lambda_{q+1}^2}\left(\mathbb{W}_{q+1}^{{k_1}^{\perp}}\right) \otimes \mathbb{W}_{q+1}^{{k_2}^{\perp}}\right\Vert_{L^1}\\
        &\leq \sum_{k_1,k_2 \in \Lambda} \left\Vert \mathbb{P}_{> \lambda_{q+1}}(a_{k_1}(R_q)) \right\Vert_{L^{\infty}} \left\Vert a_{k_2}(R_q) \right\Vert_{L^{\infty}} \left\Vert \mathbb{P}_{\leq \lambda_{q+1}^2}\left(\mathbb{W}_{q+1}^{{k_1}^{\perp}}\right) \right\Vert_{L^{2}} \left\Vert \mathbb{W}_{q+1}^{{k_2}^{\perp}}\right\Vert_{L^{2}}\\
        &\lesssim \sum_{k_1 \in \Lambda} \left\Vert \mathbb{P}_{> \lambda_{q+1}}(a_{k_1}(R_q)) \right\Vert_{L^\infty} \, .
    \end{split}
\end{equation}
Now since $a_{k_1}(R_q)$ is smooth, \eqref{eq:wp1wp3:est} tends to $0$ as we send $\lambda_{q+1} \to \infty$. Thus taking $\lambda_{q+1}$ large enough we have
\begin{equation}\label{eq:wp1wp3:est_final}
    \Vert w^{(p,1)}_{q+1} \otimes w^{(p,3)}_{q+1} \Vert_{\dot{H}^{-2}} < 2^{-2q-100}.
\end{equation}

Similarly we estimate
\begin{equation}
    \begin{split}
        \Vert w^{(p,2)}_{q+1} \otimes w^{(p,3)}_{q+1} \Vert_{\dot{H}^{-2}} &\lesssim \sum_{k_1,k_2 \in \Lambda} \left\Vert a_{k_1}(R_q) a_{k_2}(R_q) \mathbb{P}_{> \lambda_{q+1}^2}\left(\mathbb{W}_{q+1}^{{k_1}^{\perp}}\right) \otimes \mathbb{W}_{q+1}^{{k_2}^{\perp}} \right\Vert_{L^1}\\
        &\lesssim \sum_{k_1,k_2 \in \Lambda} \left\Vert a_{k_1}(R_q) a_{k_2}(R_q) \right\Vert_{L^{\infty}} \left\Vert \mathbb{P}_{> \lambda_{q+1}^2}\left(\mathbb{W}_{q+1}^{{k_1}^{\perp}}\right) \right\Vert_{L^2} \left\Vert \mathbb{W}_{q+1}^{{k_2}^{\perp}} \right\Vert_{L^2}\\
        &\lesssim \sum_{k_1 \in \Lambda} \left\Vert \mathbb{P}_{> \lambda_{q+1}^2}\left(\mathbb{W}_{q+1}^{{k_1}^{\perp}}\right) \right\Vert_{L^2} \, .
    \end{split}
\end{equation}
Now using~\eqref{w:7}, we deduce
\begin{equation}\label{eq:wp2wp3:est_final}
    \Vert w^{(p,2)}_{q+1} \otimes w^{(p,3)}_{q+1} \Vert_{\dot{H}^{-2}} < 2^{-2q-100}.
\end{equation}
So combining \eqref{eq:wc2:est}, \eqref{eq:wcwp:est}, \eqref{eq:est:wp3^2}, \eqref{eq:wp1wp3:est_final}, and \eqref{eq:wp2wp3:est_final}, we get
\begin{equation}\label{eq:IH3-final:est}
    \Vert R_q + w_{q+1} \otimes w_{q+1} \Vert_{\dot{H}^{-2}} < (32)2^{-2q-100} = 2^{-2q-95}.
\end{equation}

\bigskip
\noindent Finally from \eqref{eq:indprop3:est1}, \eqref{eq:indprop3:est2}, and \eqref{eq:IH3-final:est} we conclude
$$
\Vert R_{q+1} \Vert_{\dot{H}^{-2}} < (2)2^{-q-15} + 2^{-2q-95} < 2^{-q-11}.
$$

\subsection{Proof of item~\ref{i:ind:4} at level \texorpdfstring{$q+1$}{qp1}} From \eqref{eq:increm1}, it is clear that the frequency support of $w_{q+1}$ is contained in\footnote{The $\sfrac 54$ comes from the fact that we normalize the torus so that so that complex exponentials of the form $e^{2\pi i k\cdot x}$ for $k \in \mathbb{Z}^2$ are periodic.}
$$
\bigcup_{k \in \Lambda} B\left(\frac{5}{4}\lambda_{q+1}^6k, \lambda_{q+1}^2 + \lambda_{q+1}\right) \, ,
$$
since the the frequency support of the first two terms appearing inside the summation are contained in a ball centered at the origin of radius $\lambda_{q+1}^2 + \lambda_{q+1}$ and then the modulation by $\frac{5}{4}\lambda_{q+1}^6k$ converts to a translation by this same amount on the frequency side. Thus to show $\mathbb{P}_{2^j}(w_{q+1}) = w_{q+1}$ for some $j$, it suffices to prove that for sufficiently large $\lambda_{q+1}$ and any $k \in \Lambda$, there is $j$ such that
\begin{equation}\label{eq:ballincl}
    B\left(\frac{5}{4}\lambda_{q+1}^6k, \lambda_{q+1}^2 + \lambda_{q+1}\right) \subset B\left(\frac{5}{4}\lambda_{q+1}^6k, 2\lambda_{q+1}^2\right) \subset B\left(0,\frac{3}{2} \cdot 2^{j}\right) \setminus B(0,2^j)
\end{equation}
since $\varphi_j(\xi) = 1$ for $2^j \leq |\xi| \leq (3/2)2^j$ (see Definition \ref{def:projs}). The first inclusion of \eqref{eq:ballincl} is obvious since we are assuming $\lambda_{q+1}$ is large, in particular larger than $2$. So it remains to prove the second inclusion. Clearly $\left(\frac{5}{4}\lambda_{q+1}^6 + 2\lambda_{q+1}^2\right)k$ is the point in $\partial B\left(\frac{5}{4}\lambda_{q+1}^6k, 2\lambda_{q+1}^2\right)$ furthest from the origin and similarly $\left(\frac{5}{4}\lambda_{q+1}^6 - 2\lambda_{q+1}^2\right)k$ is the point in $\partial B\left(\frac{5}{4}\lambda_{q+1}^6k, 2\lambda_{q+1}^2\right)$ closest to the origin. Therefore since $|k| = 1$, \eqref{eq:ballincl} will be proven if we can demonstrate the existence of $j$ such that
\begin{equation}\label{eq:radii-ineq}
    2^j \leq \frac{5}{4}\lambda_{q+1}^6 - 2\lambda_{q+1}^2 < \frac{5}{4}\lambda_{q+1}^6 + 2\lambda_{q+1}^2 < \frac{3}{2} \cdot 2^{j}.
\end{equation}
Since $\lambda_{q+1}$ is a power of $2$, observe for large enough $\lambda_{q+1}$ we have
\begin{equation}\label{eq:lambda_ineq}
\lambda_{q+1}^6 \leq \frac{5}{4}\lambda_{q+1}^6 - 2\lambda_{q+1}^2 < \frac{5}{4}\lambda_{q+1}^6 + 2\lambda_{q+1}^2 < \frac{3}{2}\lambda_{q+1}^6.
\end{equation}
Hence this proves \eqref{eq:radii-ineq} for $j = 6\log_2(\lambda_{q+1})$ and large enough $\lambda_{q+1}$ and thus \eqref{eq:ballincl} also follows. The disjointness of the Littlewood-Paley shells follows from the fact we can assume without loss of generality that
\begin{equation}\label{eq:lambdaineq}
    2^{100}\lambda_{q'} < \lambda_{q'+1} \quad \text{for all } q' \leq q.
\end{equation}

\subsection{Proof of item~\ref{i:ind:5} at level \texorpdfstring{$q+1$}{qp1}} Recalling that
$$
\sum_{\substack{m, n \leq q-1 \\ m \neq n}} \| w_{m+1} \otimes w_{n+1} \|_{L^1} < C-2^{-q}
$$
by the inductive assumption, we have
\begin{equation*}
\begin{split}
    \sum_{\substack{m, n \leq q \\ m \neq n}} \| w_{m+1} \otimes w_{n+1} \|_{L^1} &= \sum_{\substack{m, n \leq q-1 \\ m \neq n}} \| w_{m+1} \otimes w_{n+1} \|_{L^1} + 2\sum_{n \leq q-1} \| w_{q+1} \otimes w_{n+1} \|_{L^1}\\
    &< C - 2^{-q} + 2 \Vert w_{q+1} \Vert_{L^1} \sum_{n \leq q - 1} \Vert w_{n+1} \Vert_{L^{\infty}}\\
    &\lesssim C - 2^{-q} + 2 \left(\sum_{n \leq q - 1} \Vert w_{n+1} \Vert_{L^{\infty}}\right) \left(\lambda_{q+1}^{\frac{\epsilon_{\gamma} - 1}{2}} + \lambda_{q+1}^{-5}\right) \, .
    \end{split}
\end{equation*}
Choosing $\lambda_{q+1}$ large enough such that
\begin{equation}\label{eq:indprop5:est1}
2 \left(\sum_{n \leq q - 1} \Vert w_{n+1} \Vert_{L^{\infty}}\right) \left(\lambda_{q+1}^{\frac{\epsilon_{\gamma} - 1}{2}} + \lambda_{q+1}^{-5}\right) < 2^{-q-10}
\end{equation}
gives
$$
\sum_{\substack{m, n \leq q \\ m \neq n}} \| w_{m+1} \otimes w_{n+1} \|_{L^1} < C - 2^{-q-1} \, ,
$$
which proves the first claim. For the second, we have by the inductive assumption that
$$
\sum_{m \leq q-1} \| w_{m+1} \otimes w_{m+1} \|_{\dot{H}^{-2}} = \sum_{m \leq q-1} \| \mathbb{P}_{\neq 0} \left( w_{m+1} \otimes w_{m+1}\right) \|_{\dot{H}^{-2}} < C-2^{-q}.
$$
So using \eqref{eq:IH3-final:est} and item~\ref{i:ind:3} we have
\begin{equation}
\begin{split}
    \sum_{m \leq q} \| w_{m+1} \otimes w_{m+1} \|_{\dot{H}^{-2}} &= \sum_{m \leq q-1} \| w_{m+1} \otimes w_{m+1} \|_{\dot{H}^{-2}} + \| w_{q+1} \otimes w_{q+1} \|_{\dot{H}^{-2}}\\
    &\leq C - 2^{-q} + \Vert R_{q} + w_{q+1} \otimes w_{q+1} \Vert_{\dot{H}^{-2}} + \Vert R_{q} \Vert_{\dot{H}^{-2}}\\
    &< C - 2^{-q} + 2^{-q-9}\\
    &< C - 2^{-q-1} \, .
\end{split}
\end{equation}
This completes the proof of the induction proposition.

\noindent\textsc{Department of Mathematics, Purdue University, West Lafayette, IN, USA.}
\vspace{.03in}
\newline\noindent\textit{Email address}: \href{mailto:bharga37@purdue.edu}{bharga37@purdue.edu}.
\newline\noindent\textit{Email address}: \href{mailto:eashkari@purdue.edu}{eashkari@purdue.edu}.
\newline\noindent\textit{Email address}: \href{mailto:ngismond@purdue.edu}{ngismond@purdue.edu}.
\newline\noindent\textit{Email address}: \href{mailto:mdnovack@purdue.edu}{mdnovack@purdue.edu}.

\end{document}